\documentclass[12pt]{amsart}

\usepackage[margin=1in]{geometry}

%Selecta macros
%\textwidth=125mm
%\textheight=185mm
%\parindent=8mm
%\evensidemargin=0pt
%\oddsidemargin=0pt
%\frenchspacing
%%%%%%%%%%

\usepackage{amsmath,amscd,amssymb,amsfonts,latexsym,wasysym, mathrsfs, mathtools,hhline,color}
\usepackage[all, cmtip]{xy}
\usepackage{csquotes}
\usepackage{url}
\usepackage{comment}

\definecolor{hot}{RGB}{65,105,225}

\usepackage[pagebackref=true,colorlinks=true, linkcolor=hot ,  citecolor=hot, urlcolor=hot]{hyperref}%make all the references and links clickable
\usepackage{ textcomp }
\usepackage{ tipa }
\usepackage{graphicx,enumerate}

\theoremstyle{plain}
\newtheorem{theorem}{Theorem}[section]
\newtheorem{prop}[theorem]{Proposition}

\newtheorem{lm}[theorem]{Lemma}

\newtheorem{cor}[theorem]{Corollary}
\newtheorem{conj}[theorem]{Conjecture}

\newtheorem{thrm}[theorem]{Theorem}

\theoremstyle{definition}

\newtheorem{defn}[theorem]{Definition}

\newtheorem{rmk}[theorem]{Remark}

\newtheorem{assumption}[theorem]{Assumption}
\newtheorem{ex}[theorem]{Example}
\newtheorem*{ex*}{Example}

\def\be{\begin{equation}}
\def\ee{\end{equation}}

\def\bt{\begin{thrm}}
\def\et{\end{thrm}}

\def\bc{\begin{cor}}
\def\ec{\end{cor}}

\def\br{\begin{rmk}}
\def\er{\end{rmk}}

\def\bp{\begin{prop}}
\def\ep{\end{prop}}

\def\bl{\begin{lm}}
\def\el{\end{lm}}

\def\bex{\begin{ex}}
\def\eex{\end{ex}}

\def\bd{\begin{defn}}
\def\ed{\end{defn}}

\newcommand\fp{{\mathfrak p}}
\newcommand\fm{{\mathfrak m}}

\newcommand\sH{{\mathcal H}}

\newcommand\sP{{\mathcal P}}

\newcommand\sF{{\mathcal F}}

\newcommand\sM{{\mathcal M}}

\newcommand\sV{{\mathcal V}}
\newcommand\sL{\mathcal{L}}

\newcommand{\Fp}{\mathbb{F}_p}
\newcommand\kk{{\mathbb{K}}}

\newcommand\zz{{\mathbb{Z}}}

\newcommand\cc{{\mathbb{C}}}

                % Supp
\DeclareMathOperator{\codim}{codim}              % codim
                % mreg
                  % reg
                  
                    % id

\DeclareMathOperator{\homo}{Hom}
\DeclareMathOperator{\rhomo}{RHom}

\DeclareMathOperator{\spec}{Spec}

\DeclareMathOperator{\rank}{Rank}

\DeclareMathOperator{\Perv}{Perv}
\DeclareMathOperator{\Alb}{Alb}
\DeclareMathOperator{\alb}{alb}

\DeclareMathOperator{\depth}{depth}

\DeclareMathOperator{\Hom}{Hom}

\DeclareMathOperator{\im}{Im}
\DeclareMathOperator{\Char}{Char}

\def\bC{\mathbb{C}}
\def\cM{\mathcal{M}}

\def\cH{\mathcal{H}}

\def\lra{\longrightarrow}

\def\bZ{\mathbb{Z}}

\newcommand{\ubul}{{\,\begin{picture}(-1,1)(-1,-3)\circle*{2}\end{picture}\ }}

\title[Perverse sheaves on semi-abelian varieties]{Perverse sheaves on semi-abelian varieties}

\author{Yongqiang Liu}
\address{Y. Liu: The Institute of Geometry and Physics, University of Science and Technology of China, 96 Jinzhai Road, Hefei 230026 P.R. China} 
\email{liuyq@ustc.edu.cn}
\author{Laurentiu Maxim}
\address{L. Maxim: Department of Mathematics,         University of Wisconsin-Madison,  480 Lincoln Drive, Madison WI 53706-1388, USA.}
\email {maxim@math.wisc.edu}
\author{Botong Wang}
\address{B. Wang: Department of Mathematics,         University of Wisconsin-Madison,  480 Lincoln Drive, Madison WI 53706-1388, USA.}
\email {wang@math.wisc.edu}
\date{\today}

\keywords{semi-abelian variety, perverse sheaf, Mellin transformation, cohomology jump loci, Albanese map, generic vanishing, abelian duality space}

\subjclass[2010]{32S60, 14F17, 14F05, 55N25}

\begin{document}

\maketitle

\begin{abstract}  
We give a complete (global) characterization of $\bC$-perverse sheaves on semi-abelian varieties in terms of their cohomology jump loci.
Our results generalize Schnell's work on perverse sheaves on complex abelian varieties,  as well as Gabber-Loeser's results   
on perverse sheaves on complex affine tori. 
We apply our results to the study of cohomology jump loci of smooth quasi-projective varieties, to the topology of the Albanese map, and in the context of homological duality properties of complex algebraic varieties.
\end{abstract}

%\tableofcontents

%%%%%%%%%%%%%%%%%%%%%%%%%%%%%%%

\section{Introduction}\label{intro}

Perverse sheaves are fundamental objects at the crossroads of topology, algebraic geometry, analysis and differential equations, with important applications in number theory, algebra and representation theory.
They provide an essential tool for understanding the geometry and topology of complex algebraic varieties. 
For instance, the decomposition theorem \cite{BBD}, a far-reaching generalization of the Hard Lefschetz theorem of Hodge theory 
with a wealth of topological applications, requires the use of perverse sheaves. Furthermore, perverse sheaves are an integral part of Saito's theory of mixed Hodge module  \cite{Sa0, Sa1}. Perverse sheaves have also seen spectacular applications in representation theory, such as the proof of the Kazhdan-Lusztig conjecture, the proof of the geometrization of the Satake isomorphism,  or the 
proof of the fundamental lemma in the Langlands program (e.g., see  
\cite{CM} for a beautiful survey).  A proof of the Weil conjectures using perverse sheaves was given in \cite{KiW}.

However, despite their fundamental importance, 
perverse sheaves remain rather mysterious objects. In his 1983 ICM lecture, MacPherson \cite{MP} stated the following:
\begin{displayquote}
{\it The category of perverse sheaves is important because of its applications. It would be interesting to understand its structure more directly.}
\end{displayquote}
Alternative descriptions of perverse sheaves have since been obtained in various contexts, e.g., by MacPherson-Vilonen \cite{MV} in terms of zig-zags, by Gelfand-MacPherson-Vilonen \cite{GMV} by using quivers, etc. 

Perverse sheaves on complex affine tori have been studied by Gabber-Loeser \cite{GL} via the Mellin transformation, whereas perverse sheaves on complex abelian varieties have been completely characterized by Schnell \cite{Sch} by properties of their cohomology jump loci. 
The works of Gabber-Loeser \cite{GL} and Schnell \cite{Sch} are the departure point for this paper. Our main results give a complete (global) characterization of $\bC$-perverse sheaves on semi-abelian varieties in terms of their cohomology jump loci, generalizing Schnell's work and complementing Gabber-Loeser's results. 

\medskip

Let $X$ be a smooth connected complex quasi-projective variety. The {\it character variety} $\Char(X)$ is the connected component of $\Hom(\pi_1(X),\bC^*)$ containing the identity. It is isomorphic to
$(\bC^*)^{b_1(X)}$, and it  can be identified with the maximal spectrum $\spec \bC[H_{1,f}(X,\bZ)]$ of the $\bC$-group ring of the free part of $H_1(X,\bZ)$.   
Each character $\rho \in \Char(X)$ corresponds to a unique rank-one $\bC$-local system $L_{\rho}$ on $X$.
The {\it cohomology jump loci of a constructible complex} $\sF \in D^b_c(X,\bC)$ on $X$ are defined as:
 \be\label{jlc} \sV^i(X,\sF): = \{\rho \in \Char(X) \mid H^i(X, \sF \otimes_\bC L_{\rho}) \neq 0\}.\ee
These are generalizations of the cohomology jump loci $\sV^i(X):=\sV^i(X,\bC_X)$ of $X$, which correspond to the constant sheaf $\bC_X$, and which are homotopy invariants of $X$.

It was recently shown in \cite{BW17} that the irreducible components of the cohomology jump loci of bounded constructible complexes on any smooth complex algebraic variety $X$ are {\it linear} subvarieties. In particular, each $\sV^i(X,\sF)$ is a finite union of translated subtori of the character variety $\Char(X)$. This fact imposes strong constraints on the topology of $X$.

By the classical {Albanese map} construction  (e.g., see \cite{Iit}), cohomology jump loci of a smooth complex quasi-projective variety are realized as cohomology jump loci of constructible complexes of sheaves (or, if the Albanese map is proper, of perverse sheaves) on a semi-abelian variety. 
This partly motivates our study of cohomology jump loci of constructible complexes, with a view towards a complete characterization of perverse sheaves on complex  
semi-abelian varieties.
Besides providing new obstructions on the cohomology jump loci (hence also on the homotopy type) of smooth complex quasi-projective varieties, such a characterization has other important topological applications, such as finiteness properties of Alexander-type invariants (see, e.g., \cite{LMW2}), or 
for the study of homological duality properties of complex algebraic varieties.

\subsection{Main results}
A complex {\it semi-abelian variety} is a complex algebraic group $G$ which is an extension
$$1 \to T \to G \to A \to 1,$$
where $A$ is an abelian variety of complex dimension $g$ and $T\cong(\bC^*)^m$ is an algebraic affine torus of complex dimension $m$.
We set $\Gamma_G:=\bC[\pi_1(G)]\cong \bC[t_1^{\pm 1}, \cdots, t_{m+2g}^{\pm 1}].$ Thus $\Char(G)\cong \spec \Gamma_G \cong (\bC^*)^{m+2g}$. 

Let $\sF\in D^b_c(G,\bC)$ be a  bounded constructible complex of $\bC$-sheaves on $G$ with cohomology jump loci $\sV^i(G,\sF)$. By using the linear structure of irreducible components of the cohomology jump loci (see Theorem \ref{linear}), we introduce refined notions of {\it (semi)abelian codimensions} 
$\codim_{sa} \sV^i(G,\sF)$ and $\codim_{a} \sV^i(G,\sF)$, see Definition \ref{GA}.

The main result of our paper asserts that the position of a bounded $\bC$-constructible complex on $G$ with respect to the perverse $t$-structure on $D^b_c(G,\bC)$ can be detected by the (semi)abelian codimension of its cohomology jump loci. This result provides a complete description of $\bC$-perverse sheaves on a semi-abelian variety $G$ in terms of their cohomology jump loci, and generalizes and unifies Schnell's corresponding result \cite[Theorem 7.4]{Sch} for perverse sheaves on abelian varieties, as well as Gabber-Loeser's description \cite{GL} of perverse sheaves on complex affine tori. Specifically, we prove the following (see Theorem \ref{characterization}).
\begin{theorem} \label{icharacterization}
Let $\sF\in D^b_c(G, \bC)$ be a bounded $\bC$-constructible complex on $G$. Then we have
\begin{enumerate}
\item[(a)] $\sF\in \,^p D^{\leq 0}(G, \bC) \ \iff \ \codim_{a} \sV^i(G, \sF) \geq i \text{ for any } i\geq 0,$
\item[(b)] $\sF\in \,^p D^{\geq 0}(G, \bC) \ \iff \ \codim_{sa} \sV^i(G, \sF) \geq -i \text{ for any } i\leq 0.$
\end{enumerate}
Thus, $\sF$ is a $\bC$-perverse sheaf on $G$ if and only if the following two conditions are satisfied:
\begin{enumerate}
\item $\codim_{a} \sV^i(G, \sF) \geq i$ for any $i\geq 0$,
\item $\codim_{sa} \sV^i(G, \sF) \geq -i$ for any $i\leq 0$. 
\end{enumerate}
\end{theorem}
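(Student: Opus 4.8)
The plan is to establish the two halves (a) and (b) of Theorem~\ref{icharacterization} separately, each by the same three-step strategy: first reduce from a semi-abelian variety $G$ to the two extreme cases of an abelian variety and an affine torus (already handled by Schnell \cite{Sch} and Gabber--Loeser \cite{GL} respectively), then build a Mellin-type transform adapted to $G$ that translates cohomology-jump-loci statements into algebraic statements over $\Gamma_G$, and finally use the linear-structure theorem (Theorem~\ref{linear}) to make sense of the refined codimensions $\codim_{sa}$ and $\codim_a$ and to identify the algebraic invariants of the Mellin module that govern them.

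For part (a), I would proceed by induction on the torus rank $m$. When $m=0$ this is Schnell's theorem. For the inductive step, choose a $1$-dimensional subtorus (or better, an isogeny splitting off a $\bC^*$-factor), giving a fibration $G \to G'$ with one-dimensional affine fiber, and analyze how the perverse $t$-structure and the jump loci behave under the pushforward $Rf_*$ and the (nearby/vanishing cycle or simply the open-closed) decomposition associated to compactifying the $\bC^*$-fiber to $\bP^1$. The key point is that $\sF \in {}^pD^{\leq 0}$ can be checked after pushing forward to $A$ via the structure map $G \to A$ up to a shift, together with control of the fiberwise cohomology; the abelian codimension $\codim_a$ is precisely designed so that the generic vanishing statements on $A$ (à la Schnell, using the Fourier--Mukai/Mellin transform on $\mathrm{Pic}^0(A)$) pull back correctly. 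Concretely, one shows that $H^i(G, \sF \otimes L_\rho)$, viewed as a coherent sheaf over $\Char(G)$ via the Mellin transform $M(\sF)$, has support whose codimension (after the linear-structure reduction identifies components with translated subtori, hence with subspaces under $\Char(G) \cong \Char(A) \times \Char(T)$) can be read off, and that the right $t$-structure inequality corresponds to a concentration/Artinian-type property of $M(\sF)$ in appropriate degrees.

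For part (b), the cleanest route is duality: apply Verdier duality $\mathbb{D}$ on $G$, note that $\mathbb{D}$ exchanges ${}^pD^{\geq 0}$ with ${}^pD^{\leq 0}$ and sends $L_\rho$ to $L_{\rho^{-1}}$, so $\sV^i(G,\sF) = \sV^{-i}(G, \mathbb{D}\sF)^{-1}$ as subsets of $\Char(G)$ (inverse being a self-homeomorphism preserving codimension and the linear structure). The subtlety — and this is where (b) genuinely differs from (a), forcing the semi-abelian rather than merely abelian codimension — is that Verdier duality on a non-proper $G$ does not commute with the pushforward to $A$ the way one would like; the affine-torus directions contribute an extra shift/twist, and it is exactly this discrepancy that replaces $\codim_a$ by $\codim_{sa}$. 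So after dualizing, part (b) will not be literally equivalent to part (a) but will follow from it together with a careful bookkeeping of how $\codim_{sa}$ and $\codim_a$ are related under $\mathbb{D}$, using the Gabber--Loeser picture on the torus fibers to account for the difference.

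The main obstacle I anticipate is the second step of the strategy: constructing the Mellin transform on a semi-abelian variety with enough good properties (exactness on perverse sheaves up to shift, compatibility with the linear structure of jump loci, a projection formula relating $M_G$ to $M_A$ and $M_T$) to run the argument, and then proving the precise dictionary ``$\codim_a \sV^i \geq i$ for all $i \geq 0$'' $\Longleftrightarrow$ ``$M(\sF)$ is supported in the right codimensions / is a module of the right homological type.'' Schnell's proof on abelian varieties rests on the Fourier--Mukai transform and Generic Vanishing, which use the compactness of $A$ essentially; Gabber--Loeser's torus case uses the Mellin transform but the relevant codimension notion is the naive one. Merging these — getting a single transform on $G$ that specializes correctly and for which the two different codimension conditions in (a) and (b) emerge naturally from the algebra of $\Gamma_G$-modules — is the technical heart, and I expect it to require the most care, particularly the verification that the refined codimensions of Definition~\ref{GA} are the correct invariants and not merely sufficient ones.
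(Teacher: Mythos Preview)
Your inductive scheme on the torus rank is indeed how the paper proves the forward direction ($\Rightarrow$) of both (a) and (b): it establishes the codimension bounds for perverse sheaves (Proposition~\ref{bdeq}) by reducing to simple perverse sheaves and peeling off $\bC^*$-factors one at a time, with Schnell's theorem as base case. But two substantive pieces are missing from your plan. First, your Verdier-duality route for (b) does not go through as stated: since $G$ is not proper, Verdier duality pairs $H^i(G,\sF\otimes L_\rho)$ with $H^{-i}_c(G,\mathbb{D}\sF\otimes L_{\rho^{-1}})$, not with ordinary cohomology, so $\sV^i(G,\sF)$ is \emph{not} the inverse of $\sV^{-i}(G,\mathbb{D}\sF)$. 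The paper never attempts to deduce (b) from (a) by duality; it proves the $\codim_{sa}$ bound directly, by a parallel induction. Your ``careful bookkeeping'' is not a well-defined substitute here.

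Second, and more seriously, you do not address the converse direction ($\Leftarrow$), which is where the real content lies. The paper's argument is: assume $\sF\notin{}^pD^{\leq 0}$, take the top nonzero perverse cohomology ${}^p\sH^{i_0}(\sF)$ with $i_0>0$, and show it forces $\codim_a\sV^{k_0}(G,\sF)<k_0$ for some $k_0$. This last step requires knowing not merely the \emph{inequality} $\codim_a\sV^{i}(G,\sP)\geq i$ for a perverse sheaf $\sP$, but that the bound is \emph{attained} for some $i_+$ (the ``Moreover'' clause in Proposition~\ref{bdeq}). That sharpness in turn rests on Corollary~\ref{survive}, whose proof needs the structure theorem for simple perverse sheaves with $\chi=0$ (Theorem~\ref{simple}). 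None of this apparatus appears in your outline, and your Mellin-transform dictionary alone will not produce it: you need the description of $\sV^0(G,\sP)$ as an irreducible linear subvariety when $\sP$ is simple with vanishing Euler number, together with the precise interval $[-m''-g'',g'']$ over which a codimension-$d$ component persists.
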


As pointed out in Remark \ref{coinc}, cohomology jump loci of a constructible complex $\sF$ on $G$ can be reduced to investigating the corresponding cohomology jump loci of $\sM_*(\sF)$, with 
$\sM_*: D^b_c(G, \bC)\to D^b_{coh}(\Gamma_G)$ the Mellin transformation (see Definition \ref{Mellin}).
For complex affine tori, the image $\sM_*(\sP)$ of a perverse sheaf $\sP$ is a single coherent sheaf (cf.  \cite{GL}), while for an abelian varieties it is a perverse coherent sheaf (cf. \cite{Sch}).  It is therefore natural to ask whether there exists a $t$-structure ${^{coh}}\tau$ on $D^b_{coh}(\Gamma_G)$, which ``glues'' the standard one on the torus part with the perverse coherent one on the abelian variety, so that $\sM_*(\sP)$ sits inside the heart of this $t$-structure (or, equivalently, $\sM_*:(D^b_c(G, \bC), {^p}\tau)\to (D^b_{coh}(\Gamma_G),{^{coh}}\tau)$ is $t$-exact, with ${^p}\tau$ denoting the perverse $t$-structure on constructible complexes). %We believe the answer is negative in general. This is 
As discussed in Remark \ref{tstructure}, the answer is negative in general, if one restricts to the family of t-structures constructed in \cite{AB}.

In addition  to Theorem \ref{icharacterization}, properties of the Mellin transformation (Theorem \ref{semi}) are used here to generalize our results from \cite{LMW3}, and show that cohomology jump loci of perverse sheaves on semi-abelian varieties {\it propagate}:
%satisfy the following  list of properties:
\bt\label{propa}  The cohomology jump loci of any $\bC$-perverse sheaf $\sP$ on $G$ satisfy the following propagation property:
%\begin{enumerate}
%\item[(i)] {\it Propagation property}:
$$
 \sV^{-m-g}(G,\sP) \subseteq \cdots \subseteq \sV^{-1}(G,\sP) \subseteq \sV^{0}(G,\sP) \supseteq \sV^{1}(G,\sP) \supseteq \cdots \supseteq \sV^{g}(G,\sP).$$
Furthermore, $\sV^i(G,\sP)= \emptyset$ if $i\notin [-m-g, g]$.
\et

As an immediate consequence of Theorems \ref{icharacterization} and \ref{propa}, we get the following {known result}.
\bc\label{gv-intro} Let $\sP$ be a $\bC$-perverse sheaf on a semi-abelian variety $G$. Then $\sP$ satisfies the following properties:
\begin{enumerate}
\item[(i)] {\it Generic vanishing}: there exists a non-empty Zariski open subset $U \subset \Char(G) $ such that,  for any closed point $\rho\in U$,  $H^{i}(G, \sP\otimes_{\bC} L_\rho)=0$ for all $i\neq 0$.
\item[(ii)] {\it Signed Euler characteristic property}:  $$  \chi(G,\sP)\geq 0.$$
Moreover, the equality holds if and only if $\sV^0(G,\sP) \neq \Char(G)$.
\end{enumerate}
\ec
\br In the semi-abelian context,  the codimension lower bound (Theorem \ref{icharacterization}) and the propagation (Theorem \ref{propa}) for the cohomology jump loci of perverse sheaves are, to our knowledge, new and unexplored properties.
%To our knowledge, the codimension lower bound (Theorem \ref{icharacterization}) and the propagation property (Theorem \ref{propa}(i)) for the cohomology jump loci of perverse sheaves on semi-abelian varieties are new and unexplored properties. 
They explain in a more conceptual way the generic vanishing and signed Euler characteristic properties of Corollary \ref{gv-intro}. A propagation property for simple perverse sheaves on abelian varieties was previously obtained in \cite{We16b}. 
In the coherent setting, a similar propagation property was proved, e.g.,  in \cite[Proposition 3.14]{PP11}, for cohomology support loci of GV-sheaves on abelian varieties. 

%Some of the properties of Theorem \ref{propa} have been also obtained by other authors by different methods. 
The generic vanishing property for perverse sheaves on semi-abelian varieties was previously obtained by different methods in  \cite[Theorem 2.1]{Kra} and \cite[Theorem 4.3]{LMW2} (for the abelian context see \cite[Theorem 1.1]{KW}, \cite[Corollary 7.5]{Sch},  \cite[Vanishing Theorem]{We16}, \cite[Theorem 1.1]{BSS}). The signed Euler characteristic property  (which is an immediate consequence of generic vanishing) is originally due to Franecki and Kapranov \cite[Corollary 1.4]{FK}.  
\er

The following description of simple perverse sheaves with zero Euler number plays an essential role in the proof of Theorem \ref{icharacterization}. It also provides a unification and generalization to the semi-abelian context of similar statements in the abelian case (cf. \cite[Theorem 2]{We0}, \cite[Proposition 10.1(a)]{KW}, \cite[Theorem 7.6]{Sch}) and in the affine torus case (cf. \cite[Theorem 5.1.1]{GL}), respectively. 
\bt \label{isimple}    If $\sP$ is a simple perverse sheaf on $G$ with $\chi(G, \sP)=0$, then there exists a positive dimensional semi-abelian subvariety $G^{\prime \prime}$ of $G$, a rank-one $\bC$-local system $L_\rho$ on $G$ and a simple perverse sheaf $ \sP^\prime$
on $G^\prime=G/G^{\prime \prime}$ with $\chi(G', \sP')\neq 0$, such that  
$$ \sP\cong L_\rho \otimes_{\bC} f^* \sP^\prime[\dim G^{\prime \prime}],$$ with $f: G\to G'=G/G^{\prime \prime}$ denoting the quotient map.
Moreover,  $\sV^0(G,\sP)$ is an irreducible linear subvariety. 
\et

\subsection{Applications} Additional motivation for investigating perverse sheaves on semi-abelian varieties is provided by their wide range of applications,
%Our results have a wide range of applications,
including to the study of cohomology jump loci of smooth quasi-projective varieties, for understanding the topology of the Albanese map, as well as in the context of homological duality of complex algebraic varieties. We sample here several such applications, for more details see Section \ref{appl}. The interested reader may also consult \cite{LMW19}, which gives a brief survey of results and applications contained in the present paper and in \cite{LMW3}.

In relation to cohomology jump loci of smooth varieties, we have the following (see Corollary \ref{manifold} for a more general statement).
\bc\label{icor}
Let  $X$ be a  smooth complex quasi-projective variety of dimension $n$ with Albanese map $\alb:X \to \Alb(X)$. Suppose $R\alb_* \bC_X[n]$ is a perverse sheaf on $\Alb(X)$ (e.g., $\alb$ is proper and semi-small). Then the cohomology jump loci $\sV^i(X)$ have the following properties:
\begin{itemize}
\item[(1)] {\it Propagation property}:
$$ 
\{ \mathbf{1} \} = \sV^{0}(X)  \subseteq \cdots \subseteq \sV^{n-1}(X) \subseteq   \sV^{n}(X) \supseteq \sV^{n+1}(X) \supseteq \cdots \supseteq \sV^{2n}(X) .
$$
\item[(2)] {\it Codimension lower bound}: for any $ i\geq 0$, $$ \codim_{sa} \sV^{n-i}(X)\geq i \ \ {\rm and} \  \codim_{a} \sV^{n+i}(X) \geq i.$$
\item[(3)]For generic $\rho \in \Char(X)$, $H^{i}(X,  L_\rho)=0$ for all $i \neq n$.
\item[(4)] 
$b_i(X)>0$ for any $i\in [0, n]$,
and 
$b_1(X)\geq n$.
\end{itemize} 
\ec
\br A class of smooth complex quasi-projective varieties with proper and semi-small Albanese map is given in Example \ref{ample}.  Note also that by Theorem \ref{icharacterization}, the codimension lower bound of Corollary \ref{icor}(2) is equivalent to the fact that $R\alb_* \bC_X[n]$ is a perverse sheaf on $\Alb(X)$.
\er

As another application, to the topology of the Albanese map, we get the following generalization of \cite[Theorem 2.1]{Wa17}, see Corollary \ref{isol}. 

\bc  Let $X$ be an $n$-dimensional smooth complex quasi-projective variety with Albanese map $\alb : X \to \Alb(X)$.  If $\bigcup_{i=0}^{2n} \sV^i(X)$ contains an isolated point, then $\alb$ is dominant. 
\ec

The following generalization of \cite[Corollary 2.6]{BC} gives a topological characterization of semi-abelian varieties (see Proposition \ref{topsa}):
\bc Let $X$ be a smooth quasi-projective variety with proper Albanese map (e.g., $X$ is projective), and assume that $X$ is homotopy equivalent to a torus. Then $X$ is isomorphic to a semi-abelian variety. 
\ec

Finally, let us indicate here an application of our results to homological duality. The concept of {\it abelian duality space} was introduced by Denham-Suciu-Yuzvinsky in \cite{DSY} as an abelian version of the Bieri-Eckmann duality spces \cite{BE}, see Subsection \ref{ads} for a definition and properties.  Examples of abelian duality spaces were constructed in \cite[Theorem 4.11]{LMW3} via algebraic maps to complex affine tori. 
Here we provide generalizations of this construction to the semi-abelian setting. For example, we prove the following result (see Theorem \ref{abelianduality}).
\bc  Let $X$ be an $n$-dimensional smooth complex quasi-projective variety, which is homotopy equivalent to an $n$-dimensional CW complex (e.g., $X$ is affine).  Suppose the Albanese map $\alb$ is proper and semi-small, or $\alb$ is quasi-finite. Then $X$ is an abelian duality space of dimension $n$. 
\ec
As a consequence, very affine manifolds and complements of essential hypersurface arrangements in projective manifolds are abelian duality spaces (see  Examples \ref{va} and \ref{ample}).

\subsection{Summary}
The paper is organized as follows.
  
In Section \ref{alg}, we recall the definition of several algebraic notions (including Fitting ideals, cohomology jumping ideals and cohomology jump loci of a bounded complex of finitely generated modules) and prove several preparatory commutative algebra results.
  
In Section \ref{pta}, we recall the relevant results for perverse sheaves on complex affine tori and abelian varieties, which provide a motivation for our work. 

In Section \ref{top}, we study properties of the Mellin transformation functor for perverse sheaves on a complex semi-abelian variety. In particular, we prove the propagation property of Theorem \ref{propa}. 

Section \ref{zeroeu} is devoted to characterizing simple $\bC$-perverse sheaves with vanishing Euler number  on semi-abelian varieties. In particular, we prove Theorem \ref{isimple}.

In Section \ref{cpv}, we prove Theorem \ref{icharacterization} on  the characterization of $\bC$-perverse sheaves on semi-abelian varieties in terms of their cohomology jump loci.

Finally, Section \ref{appl} is devoted to applications. 

%\medskip

%For a brief survey of results contained in this paper and in \cite{LMW3}, and additional motivation, the interested reader may also consult \cite{LMW19}.

\medskip

\textbf{Acknowledgments.} We thank J\"org~Sch\"urmann, Dima~Arinkin and Nero~Budur for valuable discussions. We are also grateful to Rainer Weissauer and Thomas Kr\"amer for bringing related works and questions to our attention. The first author  thanks the Mathematics Department at the University of Wisconsin-Madison  for hospitality during the preparation of this work. The first author is partially supported by the starting grant KY2340000123 from University of Science and Technology of China, the project ``Analysis and Geometry on Bundles" of Ministry of Science and Technology of the People’s Republic of China and Nero Budur's research project G0B2115N from the Research Foundation of Flanders. The second author is partially supported by the Simons Foundation Collaboration Grant \#567077 and by the Romanian Ministry of National Education, CNCS-UEFISCDI, grant PN-III-P4-ID-PCE-2016-0030. The third author is partially supported by the NSF grant DMS-1701305 and a Sloan Fellowship.

%%%%%%%%%%%%%%%%%%%%%%%%%%

\section{Cohomology jump loci of a complex of $R$-modules}\label{alg}
Let $R$ be a Noetherian domain, and denote by $\spec R$ the maximal spectrum of $R$. Let $E^\ubul$ be a bounded above complex of $R$-modules with finitely generated cohomology. In this section, we recall the notion of cohomology jump loci for the complex $E^\ubul$, and discuss some preparatory results in commutative algebra. 

By a construction of Mumford (see \cite[III.12.3]{Ha}), there exists a bounded above complex $F^\ubul$ of finitely generated {\it free} $R$-modules, which is quasi-isomorphic to $E^\ubul$. 

\bd For any integer $k$ and a map $\phi$ of finitely generated free $R$-modules, let $I_{k}\phi$ denote the {\it $k$-th determinantal ideal} of $\phi$ (i.e., the ideal of minors of size $k$ of the matrix of $\phi$), see \cite[p.492-493]{E}.  
For a bounded above complex $E^\ubul$ of $R$-modules, with finitely generated cohomology, and for $F^\ubul$ a bounded above finitely generated free resolution of $E^\ubul$ as above, the {\it degree $i$ Fitting ideal} of $E^\ubul$ is defined as:
$$I^i(E^\ubul)= I_{\rank \partial^{i}}( \partial^i),$$
and the {\it  degree $i$ jumping ideal} of $E^\ubul$ is defined as: $$J^{i} (E^\ubul)=I_{\rank(F^{i})}(\partial^{i-1}\oplus \partial^{i}),
$$
where  $\partial^{i-1}: F^{i-1}\to F^{i}$ and $\partial^{i}: F^{i}\to F^{i+1}$ are differentials of the complex $F^\ubul$. 

We define the (reduced) {\it $i$-th cohomology jump locus} of $E^\ubul$ as the algebraic subset of $\spec R$ associated to $J^i(E^\ubul)$, that is,
$$  \sV^{i} (E^\ubul) :=  V\left(\sqrt{J^i(E^\ubul)}\right)\subset \spec R,
$$ 
where $\sqrt{I}$ denotes the radical ideal of $I$. 
Notice that $\sV^{i} (E^\ubul)$ is naturally isomorphic to the reduced induced closed subscheme of $\spec R/J^i(E^\ubul)$. 
\ed 
It is known that $I^i(E^\ubul)$,  $J^i(E^\ubul)$ and $\sV^i(E^\ubul)$ do not depend on the choice of the finitely generated free resolution $F^\ubul$ of $E^\ubul$; see \cite[Section 20.2]{E} and \cite[Section 2]{BW}.

\br\label{alt}
%In this paper, we do not 
The closed points of $\sV^{i} (E^\ubul)$ can also be described as follows:
$$ \sV^{i} (E^\ubul)= \{ \rho \in \spec R \mid H^i (F^\ubul \otimes _R R/\rho) \neq 0 \},$$ 
 with $F^\ubul$ a bounded above finitely generated free resolution of $E^\ubul$,  see \cite[Corollary 2.5]{BW}.\er
 
For the rest of this section, we focus on a special class of complexes of finitely generated free $R$-modules of finite length, of the form:
$$ 0\to F^{-k} \to \cdots \to F^{i-1} \overset{\partial^{i-1}}{\to} F^{i} \overset{\partial^{i}}{\to} F^{i+1} \to  \cdots \to F^{\ell-1} \overset{\partial^{\ell-1}}{\to}  F^\ell \to 0,$$
with $k, \ell\geq 0$. In what follows, we work under the following assumption: 
\begin{assumption}\label{2vanishing} $H^i(F^\ubul)=0$ and $H^i(\homo_R (F^\ubul, R))=0$,
for any $i<0$.
\end{assumption}  \noindent Here, the complex $\homo_R (F^\ubul, R)$ is given by: 
\begin{center}
$0\to \homo_R (F^{\ell}, R) \to \cdots \to \homo_R (F^{i+1}, R) \overset{\partial^{i,\vee}}{\to} \homo_R (F^{i}, R) \overset{\partial^{i-1,\vee}}{\to} \homo_R (F^{i-1}, R) \to  \cdots \to  \homo_R (F^{-k}, R) \to 0,$
\end{center} 
with $\homo_R(F^i, R)$ placed in degree $-i$.

\medskip

This section is devoted to proving the following result:
\bp \label{prop}   Let $F^\ubul$ be a finite length complex of finitely generated free $R$-modules, satisfying Assumption \ref{2vanishing}. Then, for any $i\neq 0$, we have that:  $$\sqrt{ I^{i}(F^\ubul)}= \sqrt{ J^{i}(F^\ubul)},$$
where $\sqrt{I}$ denotes the radical ideal of $I$. Moreover, the following properties hold:
\begin{enumerate}
\item[(i)]    {\it Propagation property}:
$$\sV^{-k}(F^\ubul) \subseteq \cdots \subseteq \sV^{-1}(F^\ubul) \subseteq \sV^{0}(F^\ubul) \supseteq \sV^{1}(F^\ubul) \supseteq \cdots \supseteq \sV^{\ell}(F^\ubul).
$$
\item[(ii)] {\it Depth lower bound}: for any $i\geq 0$, $$ \depth J^{\pm i}(F^\ubul) \geq i.$$
\item[(iii)]  Assume that  $V$ is an irreducible component of $\sV^0(F^\ubul)$ and let $\fp$ denote the corresponding prime ideal. Let $\fp_\fp$ denote the maximal ideal in the local ring $R_\fp$. 
If  $d= \depth \fp_\fp$, then there exists an interval $[a_1,a_2]$ with $0\in [a_1,a_2]$ and $a_2-a_1\geq d$ such that $V$ is an irreducible component of $\sV^{i}(F^\ubul)$ exactly for $i\in [a_1, a_2]$. 
\end{enumerate}
\ep

Before proving Proposition \ref{prop}, we recall the following two results from \cite[Theorem 20.9, Corollary 20.12, Corollary 20.14]{E} and \cite[Lemma 2.3, Proposition 2.5]{LMW3}.
\bl \label{E}    A complex of finitely generated free $R$-modules 
$$ F^\ubul: \  0\to F^{-k} \to \cdots \to F^{i-1} \overset{\partial^{i-1}}{\to} F^{i} \overset{\partial^{i}}{\to} F^{i+1} \to  \cdots \overset{\partial^{-1}}{\to} F^{0} $$ is exact if and only if  
the following two properties hold for any $ i\leq -1$:
\begin{itemize}
\item[(i)]   $$\rank F^{i}= \rank \partial^{i} +\rank \partial^{i-1}  .$$ 
\item[(ii)]  
 $$\depth I^{i}(F^\ubul) \geq - i \  .$$
\end{itemize}
\el
\bl \label{LMW} 
Let $F^\ubul$ be an exact complex as in Lemma \ref{E}. Then for any $i\leq -1$,
$$\sqrt{ I^{i}(F^\ubul)} \subset \sqrt{ I^{i-1}(F^\ubul)} \text{ and } \sqrt{I^{i}(F^\ubul)}= \sqrt{ J^{i}(F^\ubul)}.$$
\el

We will also make use of the following proposition.
\begin{prop} \label{W}
Let $R$ be a noetherian local domain. Let 
$$0\to F^{-k}\to\cdots\to F^{-2}\xrightarrow[]{\partial^{-2}} F^{-1}\xrightarrow[]{\partial^{-1}}F^0\xrightarrow[]{\partial^0}F^1$$
be a complex of finitely generated free $R$-modules, with $H^i(F^\ubul)=0$ for $i<0$. Let $r_i$ be the rank of $F^i$.  If $I_{r_0}(\partial^{-1}\oplus \partial^{0})=R$, 
then $I_{r_{i}}(\partial^{i}\oplus \partial^{i-1})=R$ for any $i<0$. \end{prop}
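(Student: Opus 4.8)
The plan is to induct downward on $i$, starting from $i = 0$ where the hypothesis $I_{r_0}(\partial^{-1} \oplus \partial^0) = R$ is given, and to prove at each stage that $I_{r_i}(\partial^i \oplus \partial^{i-1}) = R$, i.e., that the jumping ideal $J^i(F^\ubul)$ is the unit ideal. The key observation is that for a complex over a \emph{local} ring $R$, saying $J^i(F^\ubul) = R$ is equivalent (via Remark \ref{alt}, or directly by Nakayama applied to the determinantal ideals) to saying that $F^\ubul \otimes_R R/\fm$ has vanishing cohomology in degree $i$, where $\fm$ is the maximal ideal. So the statement becomes a purely linear-algebra assertion about the complex $\bar F^\ubul := F^\ubul \otimes_R k$ over the residue field $k = R/\fm$: if $H^0(\bar F^\ubul) = 0$, then $H^i(\bar F^\ubul) = 0$ for all $i < 0$ as well. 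But this is precisely the "propagation" of exactness: since $H^i(F^\ubul) = 0$ for $i < 0$ by hypothesis, the complex $0 \to F^{-k} \to \cdots \to F^{-1} \to F^0$ is exact, so by Lemma \ref{E} we control the depths of the determinantal ideals $I^i(F^\ubul)$.

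First I would make the reduction to the residue field precise. For a map $\phi$ of free modules over the local ring $R$, the determinantal ideal $I_r(\phi)$ equals $R$ if and only if $I_r(\bar\phi) \neq 0$ over $k$, i.e., iff $\bar\phi$ has rank $\geq r$; this is just the statement that a minor is a unit iff its image in $k$ is nonzero. Applying this to $\phi = \partial^{i-1} \oplus \partial^i \colon F^{i-1} \oplus F^{i+1} \to F^i$ (or rather the dual arrangement matching the definition of $J^i$), the condition $J^i(F^\ubul) = R$ translates to: the map $\bar\partial^{i-1} \oplus \bar\partial^i$ has rank $r_i$, which by the rank–nullity count is exactly the condition $H^i(\bar F^\ubul) = 0$. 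Next, I would invoke exactness of the truncated complex $F^{-k} \to \cdots \to F^0$ together with Lemma \ref{E}(ii), which gives $\depth I^i(F^\ubul) \geq -i$ for $i \leq -1$; combined with Lemma \ref{LMW}, $\sqrt{I^i(F^\ubul)} = \sqrt{J^i(F^\ubul)}$ for $i \leq -1$, and moreover $\sqrt{I^i(F^\ubul)} \subseteq \sqrt{I^{i-1}(F^\ubul)}$. The base of the induction is the hypothesis, which says $J^0(F^\ubul) = R$; I would need to connect this to $I^0$ or directly run the chain of inclusions of the $\sqrt{J^i}$'s downward.

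The cleanest route: once we know $H^0(\bar F^\ubul) = 0$, the full complex $\bar F^\ubul$ in degrees $\leq 0$ is exact (since a complex of $k$-vector spaces that is exact in all negative degrees and also exact in degree $0$ — which is what $H^0 = 0$ plus the fact that $F^0 \to F^1$ need not be considered, we only need $\im \bar\partial^{-1} = \ker(\bar F^0 \to \bar F^1)$... wait, more carefully: $H^0(\bar F^\ubul) = \ker \bar\partial^0 / \im \bar\partial^{-1}$, so $H^0 = 0$ means $\ker \bar\partial^0 = \im \bar\partial^{-1}$). Now the point is that exactness of $F^\ubul$ at spots $-1, \dots, -k+1$ over $R$ does \emph{not} automatically pass to $k$ — that is exactly the subtlety, since $\otimes_R k$ is not exact. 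This is where I expect the main obstacle to be: I cannot simply say "$\bar F^\ubul$ is exact because $F^\ubul$ is." Instead I must use that the higher determinantal/Fitting ideals $I^i(F^\ubul)$ for $i \leq -1$ have depth $\geq -i \geq 1$, hence are \emph{not} contained in $\fm$... no, that's backwards too — having positive depth does not make an ideal the unit ideal. The correct mechanism must be: the hypothesis $J^0 = R$ forces, via the inclusion $\sqrt{I^{-1}} \supseteq$ something, or via the exact-complex criterion applied after tensoring, the vanishing to propagate. Concretely, I would argue by descending induction: assuming $J^{i+1}(F^\ubul) = R$ (so $H^{i+1}(\bar F^\ubul) = 0$), I want $J^i(F^\ubul) = R$. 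Knowing $H^j(\bar F^\ubul) = 0$ for $j \geq i+1$ and $j < 0$ arising from... Actually the honest plan is: use that $F^{-k} \to \cdots \to F^0$ is exact and Buchsbaum–Eisenbud (Lemma \ref{E}) to get the rank equalities $r_i = \rank \partial^i + \rank \partial^{i-1}$ over $R$; these rank equalities are preserved by $\otimes_R k$ precisely on the locus where the relevant minors don't vanish, and the hypothesis pins down that locus at degree $0$. Propagating this one step at a time using $\sqrt{I^i} \subseteq \sqrt{I^{i-1}}$ from Lemma \ref{LMW} — read as: if the larger ideal $I^i$ contains a unit (equivalently its radical is $R$), I get information going down — and the generic rank considerations, forces $I^i(F^\ubul) = R$ for each $i$, hence $J^i(F^\ubul) = R$ by Lemma \ref{LMW} again. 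The technical heart, and the step I'd budget the most care for, is establishing that $J^0(F^\ubul)=R$ together with exactness in negative degrees forces $I^{-1}(F^\ubul) = R$: this is where one must combine the Buchsbaum–Eisenbud rank formulas with the explicit shape of $J^0 = I_{r_0}(\partial^{-1} \oplus \partial^0)$ and the fact that over a domain ranks are well-defined, checking that a nonvanishing maximal minor of $(\partial^{-1} \oplus \partial^0)$ restricts to give a nonvanishing minor of $\partial^{-1}$ of the expected size $\rank \partial^{-1}$.
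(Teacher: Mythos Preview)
Your eventual plan — establish $I^{-1}(F^\ubul)=R$ from the hypothesis $J^0(F^\ubul)=R$, then use the inclusion $\sqrt{I^{i}}\subseteq\sqrt{I^{i-1}}$ and the equality $\sqrt{I^i}=\sqrt{J^i}$ of Lemma~\ref{LMW} to propagate $I^i=R$ and hence $J^i=R$ for all $i\leq -1$ — is a valid route, and it is close in spirit to the paper's. The residue-field reformulation you begin with is correct but, as you yourself discover, it does not bypass the real difficulty; you may drop it.

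The gap is in the step you flag as the ``technical heart'': you do not actually explain why $J^0=R$ forces $I^{-1}=I_{\rank\partial^{-1}}(\partial^{-1})=R$. Invoking the Buchsbaum--Eisenbud rank equalities from Lemma~\ref{E} is not enough here, since those equalities $r_i=\rank\partial^i+\rank\partial^{i-1}$ are available only for $i\leq -1$, not at $i=0$. The correct mechanism is the one the paper uses: expand $J^0=I_{r_0}(\partial^{-1}\oplus\partial^0)=\sum_{j} I_j(\partial^{-1})\cdot I_{r_0-j}(\partial^0)$ (Cauchy--Binet for a block-diagonal matrix); since $R$ is \emph{local}, one summand is already the unit ideal, so $I_j(\partial^{-1})=I_{r_0-j}(\partial^0)=R$ for a specific $j$; this forces $\rank\partial^{-1}\geq j$ and $\rank\partial^0\geq r_0-j$, while the \emph{complex condition} $\partial^0\circ\partial^{-1}=0$ gives $\rank\partial^{-1}+\rank\partial^0\leq r_0$, whence $\rank\partial^{-1}=j$ and $I^{-1}=I_j(\partial^{-1})=R$. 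You allude to a ``nonvanishing maximal minor restricting to one of the expected size'', but the size is pinned down only by this rank inequality coming from $\partial^0\partial^{-1}=0$, which you never invoke.

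Once $I^{-1}=R$ is in hand, your propagation via Lemma~\ref{LMW} works. The paper instead observes that $I_j(\partial^{-1})=R$ with $j=\rank\partial^{-1}$ means a $j\times j$ minor of $\partial^{-1}$ is a unit, so after a change of basis $\im(\partial^{-1})$ is a free direct summand of $F^0$; the complex then splits, and the nonpositive-degree piece is an exact complex of free modules resolving a free module, hence split exact, giving all $J^i=R$ at once. Either conclusion is fine; the paper's is more self-contained, yours leans on Lemma~\ref{LMW}.
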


\begin{proof}
First, we recall our convention that $I_0(\phi)=R$ for any homomorphism $\phi$ between free $R$-modules and if $j$ is larger than the size of $\phi$, then $I_j(\phi)=0$.
By definition, 
$$J^0(F^\ubul)=I_{r_0}(\partial^{-1}\oplus \partial^{0})=\sum_{0\leq j\leq r_0} I_{j}(\partial^{-1})\cdot I_{r_0-j}(\partial^0).$$
Since $R$ is a local ring, $\sum_{0\leq j\leq r_0} I_{j}(\partial^{-1})\cdot I_{r_0-j}(\partial^0)=R$ implies that $I_{j}(\partial^{-1})\cdot I_{r_0-j}(\partial^0)=R$ for some $j$, or equivalently $I_{j}(\partial^{-1})=I_{r_0-j}(\partial^0)=R$. Fix this particular $j$. Then $j\leq \rank(\partial^{-1})$ and $r_0-j\leq \rank(\partial^0)$. On the other hand, since $(F^\ubul, \partial^\ubul)$ is a complex, we have
$$\rank(\partial^{-1})+\rank(\partial^0)\leq r_0.$$
Therefore, $\rank(\partial^{-1})=j$ and  $\rank(\partial^0)=r_0-j$. 

Since $I_{j}(\partial^{-1})=R$, $\partial^{-1}$ has a $j\times j$ minor, whose determinant is invertible. By a  suitable base change, this minor is equal to the size $j$ identity matrix. Without loss of generality, we may assume that this minor is located in the upper left corner. Then, by a further base change, we can assume that away from the top-left $j\times j$ corner, the top $j$ rows and the left $j$ columns of $\partial^{-1}$ are all zero. Now, since $\rank(\partial^{-1})=j$, the lower right corner of $\partial^{-1}$ is also zero. Thus, the image of $\partial^{-1}$ is a free direct summand of $F^0$. Therefore, a non-canonical splitting $F^0\cong \im(\partial^{-1})\oplus F^0/\im(\partial^{-1})$ induces a splitting of $F^\ubul$ into two complexes, one with only nonnegative degree terms 
\[ 0\to F^0/\im(\partial^{-1}) \overset{\partial_0}{\lra} F^1 \, \]
and one with only nonpositive degree terms
\[ 0\to F^{-k}\to\cdots\to F^{-2}\xrightarrow[]{\partial^{-2}} F^{-1}\xrightarrow[]{\partial^{-1}} \im(\partial^{-1})  \lra 0.\]
The complex with nonpositive degree terms is quasi-isomorphic to zero by assumption. Now, it follows that $I_{r_i}(\partial^{i}\oplus \partial^{i-1})=R$ for all $i<0$.
\end{proof}

We have now all the ingredients for proving Proposition \ref{prop}.
\begin{proof}[Proof of Proposition \ref{prop}]
Consider the truncated complex $F^{\leq 0}$: 
$$ 0\to F^{-k} \to \cdots \to F^{i-1} \overset{\partial^{i-1}}{\to} F^{i} \overset{\partial^{i}}{\to} F^{i+1} \to  \cdots \to F^{-1} \overset{\partial^{-1}}{\to} F^0  .$$
Then for any  $i\leq -1$,
$$ J^{i}(F^\ubul)=J^{i}(F^{\leq 0}).$$
Note that the depth of an ideal only depends on its radical ideal (\cite[Corollary 17.8]{E}).
Therefore, for any $i\leq -1$, we have that $$\depth  J^{i}(F^\ubul) =\depth \sqrt{ J^{i} (F^\ubul)}=\depth\sqrt{ I^{i}(F^\ubul)}=\depth I^{i}(F^\ubul) \geq \vert i\vert. $$
So if $i<0$, the claims in $(i)$ and $(ii)$ follow from Lemma \ref{E} and Lemma \ref{LMW}.  

The similar claims  for $i>0$ follow from a dual argument, based on the assumption  that $H^i(\homo_R (F^\ubul, R))=0$ for $i<0$.  Note that $\homo_R (\homo_R(F^\ubul, R)) \cong F^\ubul$. This completes the proof of $(ii)$.

To finish the proof of $(i)$, it remains to show that $\sV^{-1}(F^\ubul) \subseteq \sV^{0}(F^\ubul) \supseteq \sV^{1}(F^\ubul)$. The claim is obviously true if $\sV^0(F^\ubul)= \spec R$. If $\sV^0(F^\ubul)\neq \spec R$, we prove the claim by localization.
Let $\fm$ be a maximal ideal such that $\fm \notin \sV^0(F^\ubul)$. Let $R_\fm$ be the localization of $R$ at $\fm$, and denote $F^\ubul\otimes_R R_\fm$ by $F^\ubul_\fm$. Since  localization is an exact functor, we have that $H^i(F^\ubul_\fm)=0$ and $H^i(\homo_{R_\fm}(F^\ubul_\fm, R_\fm))=0$ for any $i<0$.
Note that $\fm \notin \sV^0(F^\ubul)$ if and only if  
 $\sV^0(F^\ubul_\fm)= \emptyset$, i.e., $J^0(F^\ubul_\fm)=R_\fm$.  On the other hand, it follows from Proposition \ref{W} that  $J^{-1}(F^\ubul_\fm)=R_\fm$, i.e., $\fm\notin \sV^{-1}(F^\ubul)$. Therefore, $\sV^{-1}(F^\ubul) \subseteq \sV^{0}(F^\ubul) $. The inclusion $\sV^{0}(F^\ubul) \supseteq \sV^{1}(F^\ubul)$ follows from a dual argument.

\medskip

Let us now prove $(iii)$. Suppose that $V\neq \spec R$, otherwise the claim is automatic.  Let $\fp$ denote the prime ideal associated to the irreducible component $V$. Consider the corresponding local ring $R_\fp$. Let $F^\ubul_\fp$ be the localization of the complex $F^\ubul$ at $\fp$. Since  localization is an exact functor, by Assumptions \ref{2vanishing}, we have $H^i(F^\ubul_\fp)=0$ and $H^i(\homo_{R_\fp}(F^\ubul_\fp, R_\fp))=0$ for any $i<0$. By
Lemma \ref{E}, for any $i\neq 0$, \be \label{local} \rank F_\fp^{i}= \rank \partial_\fp^{i} +\rank \partial_\fp^{i-1} .
\ee

Note that  $V$ is an irreducible component of $ \sV^0(F^\ubul)$ if and only if  $\sV^0(F^\ubul_\fp)= \{\fp_\fp\}$. Since $\sV^0(F^\ubul_\fp)\neq \spec R_\fp$, the equality (\ref{local}) also holds for $i=0.$ The propagation property $(i)$ implies that there exists an interval  $[a_1,a_2]$ 
containing $0$, such that $ \sV^i(F^\ubul_\fp)=\{\fp_\fp\}$ for any
 $i\in [a_1,a_2]$, 
and $\sV^i(F^\ubul_\fp)=\emptyset$ for any $i\notin[a_1,a_2]$. 
Note that the depth of $J^0(F^\ubul_\fp)$ is $d$. Thus, by equality (\ref{local}) and Lemma \ref{E}, we have $H^i(F^\ubul_\fp)=0$  for  $i<a_1+d$.

Since $\sV^i(F^\ubul_\fp)=\emptyset$ for  $i>a_2$, we have that $J^i(F^\ubul_\fp)=R_\fp$.  By using the same  argument as in the proof of Proposition \ref{W}, 
 it follows that $F^\ubul_\fp$  is exact for $i>a_2$. If $a_2<a_1+d$, then the whole complex $F^\ubul_\fp$ is exact, hence quasi-isomorphic to the zero complex. But this contradicts the assumption $\sV^0(F^\ubul_\fp)=\{\fp_\fp\}$.  So $a_2\geq a_1+d$.
\end{proof}
\br \label{half}  Let $F^\ubul$ be a finite length complex of finitely generated free $R$-modules such that $H^i(F^\ubul)=0$ for any $i<0$. The above proof of Proposition \ref{prop}$(i)$ yields that
 $$\cdots \subseteq \sV^{-2}(F^\ubul)   \subseteq \sV^{-1}(F^\ubul) \subseteq \sV^{0}(F^\ubul).$$ 
\er

\br \label{projective_free} Let $\kk$ 
be a principal ideal domain (PID).  Assume that $R=\kk[t_1^{\pm 1}, \cdots, t_N^{\pm 1}]$ for some positive integer $N$. 
 Note that any finitely generated projective $R$-module  is free \cite[Theorem 8.13]{BG}, hence every complex of $R$-modules with bounded  finitely generated cohomology  admits a free resolution of finite length. Moreover,  since $R$ is a Cohen-Macaulay ring in this case, the depth of  any ideal in $R$ coincides with its codimension.
\er

%%%%%%%%%%%%%%%%%%%%%%%%%%555555
%%%%%%%%%%%%%%%%%%%%%%%%%%%%%%%%%%%%

\section{Perverse sheaves on complex affine tori and abelian varieties}\label{pta}
In this section, we introduce the Mellin transformation functor and recall some properties of the cohomology jump loci of perverse sheaves on complex affine tori and abelian varieties, which will later on be generalized to the context of semi-abelian varieties. 

For any complex algebraic variety $X$ and any commutative Noetherian ring $R$, we denote by  $D^{b}_{c}(X,R)$ the derived category of bounded cohomologically constructible $R$-complexes of sheaves on $X$. 

Throughout this section, we fix a coefficient field $\kk$, e.g.,  $\bC$ or $\Fp=\bZ/p\bZ$ (for a prime $p$). Let  $\Perv(X,\kk)$ denote the category of perverse sheaves with $\kk$-coefficients on $X$.

%%%%%%%%%%%%%%%%%
\subsection{Mellin transformation and cohomologi jump loci}\label{MT}

Let $G$ be a complex semi-abelian variety, i.e., a complex algebraic group $G$ which is an extension
$$1 \to T \to G \to A \to 1,$$
where $A$ is an abelian variety of complex dimension $g$ and $T\cong(\bC^*)^m$ is an affine algebraic torus of complex dimension $m$.
 Set $$\Gamma_G:=\kk[\pi_1(G)]\cong \kk[t_1^{\pm 1}, \cdots, t_{m+2g}^{\pm 1}].$$ 
Let $\sL_G$ be the rank-one local system of $\Gamma_G$-modules on $G$ associated to the tautological character $\tau:\pi_1(G) \to \Gamma_G^*$, which maps the generators of $\pi_1(G)$ to the multiplication by the corresponding variables of the Laurent polynomial ring $\Gamma_G$.

 \bd \label{Mellin}\cite{GL} The {\it Mellin transformation functors (also called  Fourier transforms in \cite[Definition 2.5]{BSS})} $\sM_{\ast}, \sM_!: D^{b}_{c}(G, \kk) \to D^{b}_{coh}(\Gamma_G)$ are defined by
$$\sM_\ast(\sF) := Ra_\ast( \sL_G\otimes_{\kk}\sF) \ , \ \ \ \sM_!(\sF) := Ra_!( \sL_G\otimes_{\kk}\sF), $$
where  $D^{b}_{coh}(\Gamma_G)$ denotes the derived category of bounded coherent complexes of $\Gamma_G$-modules, and $a :G\to pt$ is  the constant map to a point. \ed

\bd For any $\kk$-constructible complex $\sF\in D^b_c(G,\kk)$, 
the {\it cohomology jump loci of $\sF$} are defined as:
\be\label{cjl}\sV^i(G,\sF): = \{\rho \in \spec \Gamma_G \mid H^i(G, \sF \otimes_\kk L_\rho) \neq 0\},\ee
where $L_{\rho}$ is the rank-one local system of $\kk_{\rho}$-vector spaces on $G$ associated to the maximal ideal $\rho$ of $\Gamma_G$, with $\kk_{\rho}=\Gamma_G/\rho$ the residue field of $\rho$.\ed

\br\label{coinc}
The relation between cohomology jump loci and the Mellin transformation is provided by the identification
\be\label{impeq} \sV^i(G,\sF)=\sV^i(\sM_\ast(\sF)),\ee
with the right-hand side defined as in Section \ref{alg}.
This is a consequence of the following isomorphism obtained by using the projection formula and Remark \ref{alt} (e.g., see the proof of \cite[Theorem 3.3]{LMW3}, and compare also with \cite[Lemma 2.6]{BSS} and \cite[Proposition 13.4]{Sch}):
\be
H^i(G, \sF \otimes_\kk L_\rho)\cong H^i(\sM_\ast(\sF) \otimes_{\Gamma_G} \kk_\rho).
\ee
\er

%%%%%%%%%%%%%%%%%%%%%%
\subsection{Perverse sheaves on complex affine tori}

If $G$ is a complex affine torus, i.e., $G=T$, the following result was proved by Gabber-Loeser \cite[Theorem 3.4.1 and Theorem 3.4.7]{GL} in the $\ell$-adic context, and then extended to the present form in   \cite[Theorem 3.2]{LMW3}.
\bt \label{GL} Let $\kk$ be a fixed field. The Mellin transformation functor $\sM_\ast$ is $t$-exact, i.e., for any $\kk$-perverse sheaf $\sP$ on $T$, we have that $H^{i}(\sM_{\ast}(\sP))=0$ for $i\neq 0$. Moreover, a constructible complex $\sF\in D^b_c(T, \kk)$ is perverse if and only if $\sM_\ast(\sF)$ is isomorphic to a complex concentrated in degree zero. 
\et

%%%%%%%%%%%%%%%%%%%%%% 
\subsection{Perverse sheaves on abelian varieties}
If $G$ is a complex abelian variety, i.e., $G=A$, one has the following results proved by Bhatt-Schnell-Scholze and Schnell. 

\bt\label{Schnell} \cite[Theorem 7.4]{Sch}
A $\bC$-constructible complex $\sF\in D^b_c(A,\bC)$ is perverse on the abelian variety $A$ if and only if for any $i \in \bZ$, 
$$ \codim \sV^{i}(A,\sP) \geq \vert 2i \vert.$$
\et

\bt \label{BSS}  \cite[Proposition 2.7]{BSS} Let $\kk$ be a fixed field.  For any $\sP\in \Perv(A,\kk)$, we have that $\sM_*(\sP) \in 
D^{\geq 0} (\Gamma_A)$ and $D_{\Gamma_A}(\sM_*(\sP))  \in D^{\geq 0} (\Gamma_A) $, i.e., 
\begin{center}
$H^{i}(\sM_{\ast}(\sP))=0$  and  $H^i (D_{\Gamma_A} (\sM_*(\sP))) =0$ for all $i<0$.
\end{center}
Here $D_{\Gamma_A}(-):=\rhomo_{\Gamma_A} (-,\Gamma_A) $ is the dualizing functor for the ring $\Gamma_A$.
\et 

The codimension lower bound for cohomology jump loci of perverse sheaves, given in Theorem \ref{Schnell}, was more recently reproved in \cite[Theorem 3.1]{BSS} by using the Hard Lefschetz theorem. 
\bt \label{HL} ({\bf Hard Lefschetz}, \cite{Moc}) Let $\kk$ be a fixed field of characteristic zero.  If $c\in  H^2(A; \kk)$ is the Chern class of an ample line bundle (ignoring twists) and if $\sP\in \Perv(A, \kk)$ is semi-simple, then the cup product map 
$$ 
H^{-i}(A, \sP) \overset{c^i}{\to}  H^i(A, \sP) $$
is an isomorphism for any $i>0$.
\et
As a direct application of the Hard Lefschetz theorem, one also has the following.
\bc Let $\kk$ be a fixed field of characteristic zero.  For any semi-simple perverse sheaf $\sP\in \Perv(A,\kk)$ and  for any integer $i$, we have that
  $$\sV^{-i} (A, \sP) = \sV^{i} (A, \sP).$$ 
\ec

A more refined result on the nonvanishing of cohomology groups for simple perverse sheaves on abelian varieties was obtained by Weissauer in \cite{We16b}.

%%%%%%%%%%%%%%%%%%%%%%%%55
\section{Perverse sheaves on semi-abelian varieties}\label{top}
In this section, we employ the Mellin transformation to study properties of the cohomology jump loci of  $\kk$-perverse sheaves on a semi-abelian variety, with $\kk$ a fixed field of coefficients.

In order to prove the main results of this section, we need a few preparatory facts,  analogous to \cite{GL}. As in \cite{FK}, by choosing a splitting $T\cong (\bC^*)^m$, we can write the semi-abelian variety $G$ as
$$G=G_1\times_A G_2\times_A \cdots \times_A G_m$$
where each $G_k$ is an extension of the abelian variety $A$ by $\bC^*$. Then each $G_k$ is a principal $\bC^*$-bundle over $A$. Let  $$G'=G_2\times_A G_3\times_A \cdots \times_A G_m,$$ 
with $G'=A$ if $m=1$. 

\bl \label{iso} Let $f: G\to G'$ be the projection onto the semi-abelian variety $G'$, given by forgetting the first coordinate of $T$. Then for any $\sP \in D^b_c(G,\kk)$, we have:
$$\sM_* (Rf_* \sP) \cong \sM_*(\sP)  \stackrel{L}{\otimes}_{\Gamma_{G}} \Gamma_{G'},$$
and
$$\sM_! (Rf_! \sP) \cong \sM_!(\sP)  \stackrel{L}{\otimes}_{\Gamma_{G}} \Gamma_{G'},$$
where $\Gamma_{G'} \cong \Gamma_G/(t_1-1)$ as a $\Gamma_G$-module. 
\el
\begin{proof}
{The proof is very similar to that of \cite[Proposition 3.1.3(c)]{GL}. The latter is phrased in the $\ell$-adic setting on complex affine tori, but the arguments carry over to the topological context on semi-abelian varieties. We only indicate here the key points, while leaving  the details to the interested reader.

The main ingredient for the calculation of $\sM_! (Rf_! \sP)$ is the classical projection formula, by making use of  the isomorphism $f^* \sL_{G'} \cong \sL_G \stackrel{L}{\otimes}_{\Gamma_{G}} \Gamma_{G'}$ in $D^b_c(G, \Gamma_{G'})$. 
The computation of $\sM_* (Rf_* \sP)$ rests on a similar projection formula $$Rf_*(\sP \otimes_{\kk} f^*\sL_{G'}) \cong Rf_*(\sP) \otimes_{\kk} \sL_{G'},$$
which can be deduced by checking that the natural projection morphism (e.g., see \cite[Lemma 1.4.1]{Schu})
$$Rf_*(\sP) \otimes_{\kk} \sL_{G'} \to Rf_*(\sP \otimes_{\kk} f^*\sL_{G'}) $$ induces stalkwise isomorphisms (since $\sL_{G'}$ is locally constant). }
\begin{comment}
For the convenience of the reader, we include here the details.

By definition, we have the following isomorphism in $D^b_c(G, \Gamma_{G'})$: $$f^* \sL_{G'} \cong \sL_G \stackrel{L}{\otimes}_{\Gamma_{G}} \Gamma_{G'}.$$ Let $a':G'\to pt$ be the constant map, with $a=a' \circ f:G \to pt$. Then 
\[
\begin{aligned}
\sM_!(\sP)  \stackrel{L}{\otimes}_{\Gamma_{G}} \Gamma_{G'}
&  \cong   Ra_!(\sP \otimes_{\kk} \sL_G)\stackrel{L}{\otimes}_{\Gamma_{G}} \Gamma_{G'} \\
& \cong Ra_!((\sP \otimes_{\kk} \sL_G)\stackrel{L}{\otimes}_{\Gamma_{G}} \Gamma_{G'}) \\
& \cong Ra_!(\sP \otimes_{\kk} (\sL_G\stackrel{L}{\otimes}_{\Gamma_{G}} \Gamma_{G'})) \\
& \cong Ra_!(\sP \otimes_{\kk} f^*\sL_{G'}) \\
& \cong Ra'_!(Rf_!(\sP \otimes_{\kk} f^*\sL_{G'})) \\
& \cong Ra'_!(Rf_!(\sP) \otimes_{\kk} \sL_{G'}) \\
& \cong \sM_! (Rf_! \sP),
 \end{aligned}
\] 
where the second to last isomorphism follows from the projection formula. 

Since $\sL_{G'}$ is a local system, we have a similar projection formula: $$Rf_*(\sP \otimes_{\kk} f^*\sL_{G'}) \cong Rf_*(\sP) \otimes_{\kk} \sL_{G'}.$$ 
Indeed, there is a natural projection morphism (e.g., see \cite[Lemma 1.4.1]{Schu})
$$Rf_*(\sP) \otimes_{\kk} \sL_{G'} \to Rf_*(\sP \otimes_{\kk} f^*\sL_{G'}) $$
which, since $\sL_{G'}$ is locally constant, can be easily checked to induce stalkwise isomorphisms. 
So the claimed formula for $\sM_*(Rf_*\sP)$ follows by a similar argument. 
\end{comment}
\end{proof}

We also need the following compatibility of the Mellin transformation with pullbacks (compare with \cite[Proposition 3.1.3(d)]{GL}):
\bl\label{pull}
Let $f: G\to G'$ be the projection onto the semi-abelian variety $G'$, as above, obtained by forgetting the first coordinate of $T$. Let $\sP' \in D^b_c(G',\kk)$. Then the following (non-canonical) isomorphisms hold in $D^b_{coh}(\Gamma_{G})$:
\begin{enumerate}
\item[(i)] $$\sM_! (f^* \sP') \cong \sM_!(\sP')[-2].$$
\item[(ii)] $$\sM_* (f^* \sP') \cong \sM_*(\sP')[-1].$$
\end{enumerate}
where $\Gamma_{G'}$ is viewed as a $\Gamma_G$-module via the isomorphism $\Gamma_{G'} \cong \Gamma_G/(t_1-1)$.
\el
\begin{proof} After fixing a splitting $G\cong G'\times \cc^*$ as topological spaces, we have the isomorphisms (compare with \cite[Proposition 3.1.1]{GL}): $$Rf_! \sL_{G}\cong \sL_{G'}[-2]$$ and  $$Rf_* \sL_{G}\cong \sL_{G'}[-1],$$
with $\sL_{G'}$ viewed as a $\Gamma_G$-sheaf. Here the shift on the right hand side corresponds to the $\cc^*$-factor in the splitting.
The desired isomorphisms follow then from the projection formula as in the proof of  \cite[Proposition 3.1.3(d)]{GL}. \end{proof}

Let $\sL_G^\vee $ denote the dual of the local system $\sL_G$.  Define $$ \sM_!^\vee( \sP):= Ra_! (\sP \otimes_{\kk} \sL_G^\vee).$$
As in \cite[Proposition 3.1.3(b)]{GL}, we have that 
\be  \label{dual}
D_{\Gamma_G} (\sM_*(\sP))\cong \sM_!^\vee(D \sP),
\ee 
where $D_{\Gamma_G}$ is the duality functor $\rhomo_{\Gamma_G}(-, \Gamma_G)$ in $D^{b}_{coh}(\Gamma_G)$.

 The following result is inspired by Gabber-Loeser's Theorem \ref{GL}  for perverse sheaves on a complex affine torus and Bhatt-Schnell-Scholze's Theorem \ref{BSS} 
 for perverse sheaves on abelian varieties.
\bt \label{semi}  Fix a field $\kk$. Let $\sP\in \Perv(G,\kk)$ be a perverse sheaf on the semi-abelian variety $G$. Then the following properties hold:
\begin{enumerate}
\item[(i)]  $\sM_*(\sP) \in 
D^{\geq 0} (\Gamma_G)$,  i.e., 
\begin{center}
$H^{i}(\sM_{\ast}(\sP))=0$ for all $i<0$. 
\end{center}
\item[(ii)]  $D_{\Gamma_G}(\sM_*(\sP))  \in D^{\geq 0} (\Gamma_G) $, i.e., 
\begin{center}
   $H^i (D_{\Gamma_G} (\sM_*(\sP))) =0$ for all $i<0$.
\end{center}
Here $D_{\Gamma_G} $ denotes as above the dualizing functor for the ring $\Gamma_G$.
\end{enumerate}
\et
 
\begin{proof}
The Mellin transformation commutes with field extensions, so $$\sM_*(\sP\otimes_\kk \overline{\kk})\cong \sM_*(\sP)\otimes_\kk \overline{\kk},$$
where $\overline{\kk}$ is the algebraic closure of $\kk$. %Once we know that the above properties hold for $\sM_*(\sP\otimes_\kk \overline{\kk})$, the same is true for $\sM_*(\sP)$. 
Therefore, it suffices to prove the theorem in the case when $\kk$ is algebraically closed.

The abelian category of $\kk$-perverse sheaves is Artinian and Noetherian,  hence there is a well-defined notion of length of $\kk$-perverse sheaves.   By induction on the length of $\kk$-perverse sheaves, all claims in the statement can be reduced to the case of simple perverse sheaves.

Therefore, without any loss of generality, we can assume that  $\sP$ is a simple $\kk$-perverse sheaf on $G$, with $\kk$ an algebraically closed field. We prove the theorem by induction on the dimension of the torus $T$. 
If $\dim T=0$, then $G$ is an abelian variety, so both assertions follow directly from Theorem \ref{BSS}. For the induction step we proceed as follows.

\medskip
 
\noindent{\it Proof of Claim $(i)$.} \ 

Let $f: G\to G'$ be the projection onto the semi-abelian variety $G'$ as in Lemma \ref{iso}. Since the relative dimension of the affine morphism $f$ is $1$, the only possibly non-trivial perverse cohomology sheaves $^p \cH^i Rf_*(\sP)$ may appear in the range $i\in \{-1, 0\}$ (see, e.g., \cite[Corollary 5.2.14(ii), Theorem 5.2.16(i)]{D2}). Since $f$ is a smooth morphism of relative dimension one, it follows from \cite[Page 111]{BBD} that there is a canonical monomorphism of perverse sheaves
\be\label{mn1} f^*\left( ^p \cH^{-1} Rf_*(\sP)\right)[1]\hookrightarrow \sP.\ee
If $^p \cH^{-1} Rf_*(\sP)$ is non-zero then, since $\sP$ is simple, the monomorphism (\ref{mn1}) is an isomorphism:
$$f^*\left( ^p \cH^{-1} Rf_*(\sP)\right)[1]\cong \sP.$$
The desired claim follows in this case from Lemma \ref{pull}(ii), by using the induction hypothesis applied to the perverse sheaf $^p \cH^{-1} Rf_*(\sP)$ on $G'$.

On the other hand, if $^p \cH^{-1} Rf_*(\sP)$ is zero, then $Rf_*(\sP)$ is a perverse sheaf on $G'$.
Recall that by Lemma \ref{iso} we have  the isomorphism 
$$\sM_*(Rf_*(\sP))\cong \sM_*(\sP)\stackrel{L}{\otimes}_{\Gamma_{G}} \Gamma_{G'}.$$
Since $\Gamma_{G'}\cong \Gamma_G/(t_1-1)$, the complex $\Gamma_G\stackrel{t_1-1}{\longrightarrow}\Gamma_G$ is a free resolution of the $\Gamma_G$-module $\Gamma_{G'}$. Thus, 
\begin{equation}\label{tensor1}
\sM_*(Rf_*(\sP))\cong \sM_*(\sP){\otimes}_{\Gamma_G}(\Gamma_G\stackrel{t_1-1}{\longrightarrow}\Gamma_G).
\end{equation}
By the  induction hypothesis applied to the perverse sheaf $Rf_*(\sP)$ on $G'$, it follows that $H^i(\sM_*(Rf_*(\sP)))=0$ for $i<0$.
%At this point, since  $Rf_*(\sP)$ is a perverse sheaf on $G'$, we can apply the induction hypothesis for $Rf_*(\sP)$ to obtain that $H^i(\sM_*(Rf_*(\sP)))=0$ for $i<0$. 
Hence by (\ref{tensor1}), we get that the multiplication by $t_1-1$
$$H^i(\sM_*(\sP))\stackrel{t_1-1}{\longrightarrow}H^i(\sM_*(\sP))$$
is surjective for $i<0$. 
Let $\fm\subset \Gamma_G$ be any maximal ideal  such that $(t_1-1)\in \fm$. Then,  by localization at $\fm$, we get that $$H^i(\sM_*(\sP))_\fm\stackrel{t_1-1}{\longrightarrow}H^i(\sM_*(\sP))_\fm$$
is surjective for $i<0$. 
Therefore, by Nakayama's Lemma for the local ring $(\Gamma_G)_\fm$, we get that 
$H^i(\sM_*(\sP))_\fm =0$ for $i<0$ if $(t_1-1)\in \fm$.
   
In general, fix a maximal ideal $\fm $ and assume that $(t_1-\lambda_1)\in \fm$ for some $\lambda_1 \in \kk^*$ (here we use the assumption that $\kk$ is algebraically closed). Consider the rank-one $\kk$-local system $L_{\lambda_1^{-1},1,\cdots,1}$ on $G$.  Then, as above,  $H^i(\sM_*(\sP\otimes L_{\lambda_1^{-1},1,\cdots,1}))_\fm =0$ for $i<0$ if $(t_1-1)\in \fm$, i.e.,  \begin{center}
$H^i(\sM_*(\sP))_\fm =0$ for $i<0$ if $(t_1-\lambda_1)\in \fm$.
\end{center}
Therefore, the vanishing $H^i(\sM_*(\sP))_\fm =0$ for $i<0$ holds for any maximal ideal $\fm\subset \Gamma_G$, which implies the desired result.

\bigskip
\noindent{\it Proof of Claim $(ii)$:} \ 

The proof of $(ii)$ is obtained by dualizing the arguments used for proving $(i)$. %We include complete details for the convenience of the reader.
Indeed, we get by (\ref{dual})  that $D_{\Gamma_G} (\sM_*(\sP))\cong \sM_!^\vee(D \sP)$. Moreover, since $D \sP \in \Perv(G,\kk)$ if and only if $\sP \in \Perv(G,\kk)$, after replacing $\sP$ by its dual, it suffices to show that $H^i  (\sM_!^\vee(\sP)) =0$ for all $i<0$. 

If $\dim T=0$, then $G$ is an abelian variety, so the assertion follows from Theorem \ref{BSS}. 
For the induction step, %we use an argument dual to that used in the proof of $(i)$. 
let $f:G\to G'$ be the map considered above. The only possibly non-trivial perverse cohomology sheaves $^p \cH^i Rf_!(\sP)$ may appear in the range $i\in \{ 0, 1\}$ (see, e.g., \cite[Corollary 5.2.14(ii), Theorem 5.2.16(ii)]{D2}). Since $f$ is a smooth morphism of relative dimension $1$, one has a canonical epimorphism of perverse sheaves (cf. \cite{BBD})
\be\label{epi} \sP\twoheadrightarrow f^!\left( ^p \cH^{1} Rf_!(\sP)\right)[-1] =f^*\left({^p \cH}^{1} Rf_!(\sP)\right)[1].\ee
%Since $f$ is a smooth morphism of relative dimension $1$, we also have that (see \cite[p. 108-109]{BBD}): $$f^!=f^*[2].$$ Hence $f^!\left({^p \cH}^{1} Rf_!(\sP)\right)[-1]= f^*\left({^p \cH}^{1} Rf_!(\sP)\right)[1]$.
So if $^p \cH^{1} Rf_!(\sP)$ is non-zero then, since $\sP$ is simple, the epimorphism (\ref{epi}) is an isomorphism:
$$f^*\left( ^p \cH^{1} Rf_!(\sP)\right)[1]\cong \sP.$$
The desired cohomology vanishing follows in this case from Lemma \ref{pull}(i), by using the induction hypothesis applied to the perverse sheaf $^p \cH^{1} Rp_!(\sP)$ on $G'$. 
On the other hand, if $^p \cH^{1} Rf_!(\sP)$ is zero, then $Rf_!(\sP)$ is a perverse sheaf on $G'$. The desired vanishing then follows as in part $(i)$ by using Lemma \ref{iso}, the induction hypothesis for the perverse sheaf $Rf_!(\sP)$ on $G'$, and localization.
%Lemma \ref{iso} gives  the isomorphism 
%$$\sM_!^\vee(Rf_!(\sP))\cong \sM_!^\vee(\sP)\stackrel{L}{\otimes}_{\Gamma_{G}^\vee} \Gamma^\vee_{G'}.$$
%Since $\Gamma^\vee_{G'}\cong \Gamma^\vee_G/(t_1^{-1}-1)$, the complex $\Gamma^\vee_G\stackrel{t_1^{-1}-1}{\longrightarrow}\Gamma^\vee_G$ is a free resolution of $\Gamma^\vee_{G'}$. Thus, 
%\begin{equation}\label{tensor2}
%\sM^\vee_!(Rf_!(\sP))\cong \sM_!^\vee(\sP){\otimes}_{\Gamma^\vee_G}(\Gamma^\vee_G\stackrel{t_1^{-1}-1}{\longrightarrow}\Gamma^\vee_G).
%\end{equation}
%Since  $Rf_!(\sP)$ is a perverse sheaf on $G'$, we can apply the induction hypothesis for $Rf_!(\sP)$ to obtain that $H^i(\sM_!^\vee(Rf_!(\sP)))=0$ for $i<0$. Hence by (\ref{tensor2}), we get that the multiplication by $t_1^{-1}-1$
%$$H^i(\sM_!^\vee(\sP))\stackrel{t_1^{-1}-1}{\longrightarrow}H^i(\sM_!^\vee(\sP))$$
%is surjective for $i<0$.  
%Again, by Nakayama's Lemma,   $H^i(\sM^\vee_!(\sP))_\fm =0$ for $i<0$ if $(t_1^{-1}-1)\in \fm$. By twisting $\sP$ with a rank-one $\kk$-local system as at the end of the proof of $(i)$,  we get that $H^i(\sM^\vee_!(\sP))_\fm =0$ for $i<0$ for any maximal ideal $ \fm\subset \Gamma_G^\vee$. So the desired vanishing follows.
\end{proof}

\br
Our proof of the above Theorem \ref{semi} follows the strategy employed for proving 
Gabber-Loeser's Theorem \ref{GL}  for perverse sheaves on a complex affine torus $T$.  
In the case of an abelian variety $A$, Bhatt-Schnell-Scholze \cite{BSS} proved Theorem \ref{BSS} by using the projection formula and Artin's vanishing theorem on the universal cover of $A$, together with duality statements warranted by the compactness of $A$. In our context, i.e., working with perverse sheaves on a semi-abelian variety $G$, since the universal cover of $G$ is still a Stein space, we get by the projection formula and Artin's vanishing theorem on the universal cover of $G$ that $H^i(\cM_!(\sP))=0$ for all $i<0$. However, 
the lack of compactness of $G$ does not allow us to translate this vanishing into part $(i)$ of 
Theorem \ref{semi}. Instead, upon composing with the natural involution on $\Gamma_G$ and using the duality formula (\ref{dual}), this gives an alternative proof of part $(ii)$ in  Theorem \ref{semi}. We leave the details to the interested reader.
\er

\bc\label{satisfy_assumption}
Let $\sP$ be a $\kk$-perverse sheaf on $G$.  Any bounded complex $F^\ubul$ of finitely generated free $\Gamma_G$-modules representing $\sM_*(\sP)$ satisfies Assumption \ref{2vanishing}.
\ec
\begin{proof}
The assertion follows immediately from statements (i) and (ii) of Theorem \ref{semi}.
\end{proof}

\bc\label{mell} 
Let $\kk$ be a fixed field. If $\sF\in \,{^p D}^{\geq 0}(G,\kk)$, then $\sM_*(\sF) \in D^{\geq 0} (\Gamma_G)$. 
\ec
\begin{proof}
{Since $\sM_*$ is a functor of triangulated categories, the assertion follows from Theorem \ref{semi} and d\'evissage. }
%Fixing any $\sF\in D^b(G, \kk)$, we will show by descending induction on $k$ that
%\begin{equation}\label{geqzero}
%\sM_*\left(^p \tau^{\geq k}(\sF)\right)\in D^{\geq k} (\Gamma_G),
%\end{equation}
% where $^p \tau^{\geq k}$ is the perverse truncation functor. 
%For the beginning of induction, note that since $\sF$ is a bounded complex, for sufficiently large $k$ we have that $^p \tau^{\geq k}(\sF)=0$. Thus, (\ref{geqzero}) follows in this case. For the induction step, we note that the Mellin transformation is a functor of triangulated categories. So the distinguished triangle in $D^b(G, \kk)$
%$$^p \sH^k(\sF)[-k]\to \,^p \tau^{\geq k}(\sF)\to \,^p \tau^{\geq k+1}(\sF)\xrightarrow{[1]}$$
%induces a distinguished triangle in $D^b(\Gamma_G)$
%\be\label{dt1}\sM_*\left(\,^p \sH^k(\sF)[-k]\right)\to \sM_*\left(\,^p \tau^{\geq k}(\sF)\right)\to \sM_*\left(\,^p \tau^{\geq k+1}(\sF)\right)\xrightarrow{[1]}\ee
%By Theorem \ref{semi}(i), we have that $\sM_*\left(\,^p \sH^k(\sF)[-k]\right)\in D^{\geq k}(\Gamma_G)$. By the induction hypothesis, $\sM_*\left(\,^p \tau^{\geq k+1}(\sF)\right)\in D^{\geq k+1}(\Gamma_G)$. Thus, the long exact sequence of hypercohomology groups associated to the distinguished triangle (\ref{dt1}) yields that $\sM_*\left(\,^p \tau^{\geq k}(\sF)\right)\in D^{\geq k}(\Gamma_G)$. 
%Now the assertion of the corollary follows from (\ref{geqzero}) by letting $k=0$, since for $\sF\in \,^p D^{\geq 0}(G,\kk)$ we have $^p \tau^{\geq 0}(\sF) \cong \sF.$ 
\end{proof}

We can now prove the main result of this section.
\bt \label{package}   Let $\kk$ be a fixed field.  For any perverse sheaf $\sP\in \Perv(G,\kk)$, the cohomology jump loci of $\sP$ satisfy the following propagation package:
\begin{enumerate}
\item[(i)] {\it Propagation property}:
$$
 \sV^{-m-g}(G,\sP) \subseteq \cdots \subseteq \sV^{-1}(G,\sP) \subseteq \sV^{0}(G,\sP) \supseteq \sV^{1}(G,\sP) \supseteq \cdots \supseteq \sV^{g}(G,\sP).$$
Furthermore, $\sV^i(G,\sP)= \emptyset$ if $i\notin [-m-g, g]$.
\item[(ii)] {\it Codimension lower bound}: for any $i\geq 0$, $$ \codim \sV^{\pm i}(G,\sP) \geq i.$$
%\item[(iii)] {\it Generic vanishing}: there exists a non-empty Zariski open subset $U \subset \spec \Gamma_G $ such that,  for any maximal ideal $\rho\in U$,  $H^{i}(G, \sP\otimes_{\kk} L_\rho)=0$ for all $i\neq 0$.
%\item[(iv)] {\it Signed Euler characteristic property}:  $$  \chi(G,\sP)\geq 0.$$ Moreover, the equality holds if and only if $\sV^0(G,\sP) \neq \spec \Gamma_G$.
\end{enumerate} 
\et

\begin{proof}
We begin by showing that $\sV^i(G,\sP)=\emptyset$ for $i\notin [-m-g,g]$. Since the projection map $\pi: G\to A$ is an affine morphism of relative dimension $m$, by \cite[Corollary 5.2.14(ii), Theorem 5.2.16(i)]{D2}, we have
\begin{equation}\label{interval}
^p \cH^{\ell} R\pi_*(\sP)=0 \ \text{ if } \ \ell\notin [-m, 0].
\end{equation}
On the other hand, since $A$ is an irreducible algebraic variety of complex dimension $g$ and $^p \cH^{\ell} R\pi_*(\sP)$ is a perverse sheaf on $A$, we have 
\begin{equation}\label{interval2}
H^k(A, {^p \cH}^{\ell} R\pi_*(\sP))=0 \ \text{ if } \ k \notin [-g,g],
\end{equation}
 e.g., see \cite[Proposition 5.2.20]{D2}. By the perverse Leray spectral sequence for $\pi$, i.e., 
\be\label{pss}
E_2^{k,\ell}=H^k(A, {^p \cH}^{\ell} R\pi_*(\sP)) \Longrightarrow H^{k+\ell}(G,\sP),
\ee
and the vanishing properties (\ref{interval}) and (\ref{interval2}), 
it follows readily that $H^i(G,\sP)=0$ for $i \notin [-m-g,g]$. In order to show that $\sV^i(G,\sP)=\emptyset$ for $i\notin [-m-g,g]$, we apply the above reasoning to the $\kk_{\rho}$-perverse sheaf $\sP \otimes_{\kk} L_{\rho}$, with $L_{\rho}$ the rank-one local system associated to any $\rho \in \spec \Gamma_G$. 

We next notice that, by Remark \ref{projective_free}, we can represent $\sM_*(\sP)$ by a bounded complex of finitely generated free $\Gamma_{G}$-modules $F^\ubul$. By Corollary \ref{satisfy_assumption}, $F^\ubul$ satisfies Assumption \ref{2vanishing}. Now, the codimension lower bound $(ii)$ follows from Proposition \ref{prop} and Remark \ref{coinc} using the fact that $\Gamma_G$ is a Cohen-Macauley ring (hence the depth is same as the codimension).
\end{proof}

As an immediate consequence of Theorem \ref{package}, we get {a new proof of the following} known result:
\bc\label{gvcor} Let $\kk$ be a fixed field.  Any perverse sheaf $\sP\in \Perv(G,\kk)$ satisfies:
\begin{enumerate}
\item[(i)] {\it Generic vanishing}: there exists a non-empty Zariski open subset $U \subset \spec \Gamma_G $ such that,  for any maximal ideal $\rho\in U$,  $H^{i}(G, \sP\otimes_{\kk} L_\rho)=0$ for all $i\neq 0$.
\item[(ii)] {\it Signed Euler characteristic property}:  $$  \chi(G,\sP)\geq 0.$$
Moreover, the equality holds if and only if $\sV^0(G,\sP) \neq \spec \Gamma_G$.
\end{enumerate} 
\ec

\begin{proof}
The generic vanishing of $(i)$ is a direct consequence of Theorem \ref{package}. Indeed, by the codimension lower bound, it follows that $\bigcup_{i \neq 0} \sV^i(G,\sP)$ has at least codimension $1$ in $\spec \Gamma_G$. Moreover, the propagation property yields that if $\rho \notin \bigcup_{i \neq 0} \sV^i(G,\sP)$ then $H^{i}(G, \sP\otimes_{\kk} L_\rho)=0$ for all $i\neq 0$. For such a generic choice of $\rho \in \spec \Gamma_G $, we have that 
$$\chi(G,\sP)=\chi(G,\sP\otimes_{\kk} L_\rho)=\dim_{\kk} H^0(G, \sP\otimes_{\kk} L_\rho)\geq 0.$$
The last claim in $(ii)$ follows immediately from the propagation property of Theorem \ref{package}.
\end{proof}

\br
An equivalent formulation of the propagation property for cohomology jump loci of perverse sheaves (Theorem \ref{package}(i)) is the following. For $\sP\in \Perv(G, \kk)$, suppose that not all cohomology groups $H^j(G, \sP)$ are zero. Let 
$$k_+:=\max\{j|H^j(G, \sP)\neq 0\} \text{ and } k_-:=\min\{j|H^j(G, \sP)\neq 0\}.$$
Then Theorem \ref{package}(i) is equivalent to $k_+\geq 0$, $k_-\leq 0$ and
$$H^j(G, \sP)\neq 0 \ \iff \ k_-\leq j\leq k_+. $$
Moreover, if $\kk$ is a field of characteristic zero and if $\sP$ is a semi-simple perverse sheaf on an abelian variety $G=A$, then the Hard Lefschetz Theorem \ref{HL} yields that $k_-=-k_+$. Then by the relative Hard Lefschetz theorem for the Albanese map of a smooth projective variety, we also recover \cite[Corollary 1]{We16b}. 
\er

%%%%%%%%%%%%%%%%%%%%%%%%%%%
\section{Simple $\bC$-perverse sheaves with vanishing Euler number}\label{zeroeu}
In this section, we investigate simple perverse sheaves with Euler number zero.
Throughout this section, we fix $\kk=\bC$, thus $\spec \Gamma_G \cong (\bC^*)^{m+2g}$. 

\bd A {\it linear subvariety}  of $\spec \Gamma_G$ is a closed subvariety of $\spec \Gamma_G$ of the form:
\begin{center}
$\rho \cdot \im (f^\#: \spec \Gamma_{G'} \to \spec \Gamma_{G})$
\end{center}
where $f: G\to G'$ is a surjective homomorphism of semi-abelian varieties with connected fibers and $\rho \in \spec \Gamma_G$ is a rank-one character. 
\ed

\bd For any $\sF \in D^b_c(G,\bC)$, set
$$\sV^i(G, \sF):= \{\rho \in \spec \Gamma_G \mid H^i(G, \sF\otimes_{\bC} L_\rho)\neq 0\}.$$ 
\ed

The following important property for $\sV^i(G, \sF)$ follows from the proof of \cite[Theorem 10.1.1]{BW17}.
\bt[Structure Theorem] \label{linear} Let $G$ be a complex semi-abelian variety and let $\sF\in D^b_c(G,\bC)$ be a  bounded $\bC$-constructible complex on $G$. Then each $\sV^i(G,\sF)$ is a finite union of linear subvarieties of $\spec \Gamma_G $. \et

We also need the following fact, similar to \cite[Proposition 3.4.6]{GL}.
\begin{prop}\label{nonzero}
A constructible complex $\sF\in D^b_c(G, \bC)$ is the zero object if and only if $\sM_*(\sF)=0$. In particular, if $\sF\in D^b_c(G, \bC)$ is nonzero, then $\sV^i(G, \sF)\neq \emptyset$ for some $i\in \bZ$. 
\end{prop}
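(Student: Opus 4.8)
The plan is to prove the statement by induction on the dimension of the torus part $T$ of $G$, mirroring the strategy used in Lemma \ref{iso} and Theorem \ref{semi}, and to reduce the claim about general constructible complexes to the vanishing statement $\sM_*(\sF) = 0 \implies \sF = 0$. One direction is trivial: if $\sF = 0$ then $\sL_G \otimes_\bC \sF = 0$, hence $\sM_*(\sF) = Ra_*(\sL_G \otimes_\bC \sF) = 0$. For the converse, suppose $\sM_*(\sF) = 0$; I want to conclude $\sF = 0$. The base case $\dim T = 0$ is the abelian variety case, which is \cite[Proposition 3.4.6]{GL}'s analogue in the setting of Schnell's work (or can be proved directly, as below, since for $A$ the argument with a covering by affine opens and the fiber cohomology still applies — alternatively one cites the corresponding statement for abelian varieties).

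For the induction step, let $f\colon G \to G'$ be the projection forgetting the first $\bC^*$-coordinate, as in Lemma \ref{iso}, so $G'$ is a semi-abelian variety with smaller torus rank. By Lemma \ref{iso} we have
$$\sM_*(Rf_*\sF) \cong \sM_*(\sF) \stackrel{L}{\otimes}_{\Gamma_G} \Gamma_{G'}.$$
Hence $\sM_*(\sF) = 0$ forces $\sM_*(Rf_*\sF) = 0$, and by the induction hypothesis applied on $G'$ we get $Rf_*\sF = 0$. The remaining task is to deduce $\sF = 0$ from $Rf_*\sF = 0$, where $f$ is a Zariski-locally trivial principal $\bC^*$-bundle (each $G_k \to A$ is such, hence so is $f$ after choosing the splitting $G = G_1 \times_A \cdots \times_A G_m$). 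This is a local statement on $G'$: over a trivializing open $U \subset G'$, $f^{-1}(U) \cong U \times \bC^*$, and $Rf_*\sF|_U = Rp_{1,*}(\sF|_{U\times\bC^*})$. The stalk of $Rf_*\sF$ at a point $y \in U$ computes $R\Gamma(\{y\}\times\bC^*, \sF|_{\{y\}\times\bC^*})$ by proper base change (or base change for the fiber), so vanishing of all these cohomologies for every $y$ gives $\sF|_{\{y\}\times\bC^*} = 0$ — here one uses that a constructible complex on $\bC^*$ with vanishing hypercohomology (twisted appropriately, or just by a stratification/Euler characteristic argument on the affine line minus a point) must be zero. Running this over all $y$ yields $\sF = 0$.

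The ``in particular'' clause follows immediately: if $\sF \neq 0$ then $\sM_*(\sF) \neq 0$, so $H^i(\sM_*(\sF)) \neq 0$ for some $i$; by Remark \ref{coinc} (the identification $\sV^i(G,\sF) = \sV^i(\sM_*(\sF))$) and Remark \ref{alt}, a complex of $\Gamma_G$-modules with nonzero cohomology has nonempty $\sV^i$ for some $i$ (indeed a nonzero finitely generated module has nonempty support, and the jump loci detect the cohomology modules via their free resolution), hence $\sV^i(G,\sF) \neq \emptyset$ for some $i$.

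The main obstacle I anticipate is the very last local step: carefully deducing $\sF = 0$ from the vanishing of $Rf_*\sF$ for a $\bC^*$-bundle $f$. The subtlety is that $Rf_*$ a priori only sees the cohomology of $\sF$ along the fibers, not $\sF$ itself, so one must argue that a nonzero constructible complex on $\bC^*$ cannot have vanishing (underived, i.e. all-degree) pushforward to a point — this is where one invokes that $\bC^*$ is connected and one-dimensional, so a constructible complex with all hypercohomology zero is forced to be acyclic stalk-wise (e.g. by restricting to the generic stratum and using that $R\Gamma_c$ or $R\Gamma$ of a shifted local system on $\bC^*$, or on a punctured disk, is nonzero unless the local system is zero). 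Packaging this cleanly — perhaps by instead citing the torus case of the statement, Theorem \ref{GL}, which already gives that $\sM_*$ is faithful on $D^b_c(\bC^*,\bC)$ in a suitable sense, or by directly adapting \cite[Proposition 3.4.6]{GL} — is the part that needs the most care; everything upstream is a formal consequence of Lemma \ref{iso} and induction.
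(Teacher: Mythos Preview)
Your inductive setup is right, and matches the paper's approach, but the induction step has a genuine gap. You try to deduce $\sF=0$ from $Rf_*\sF=0$ alone, and this implication is simply false for a $\bC^*$-bundle $f$. Concretely, let $L_\lambda$ be a rank-one local system on $\bC^*$ with monodromy $\lambda\neq 1$, and pull it back to $G\cong G'\times\bC^*$ along the second projection; then $Rf_*(f^*L_\lambda)=0$ since $H^*(\bC^*,L_\lambda)=0$, yet $f^*L_\lambda\neq 0$. Your fiberwise claim ``a nonzero constructible complex on $\bC^*$ cannot have vanishing hypercohomology'' is therefore wrong as stated, and the hedge ``twisted appropriately'' is doing work that the argument as written does not supply: from $Rf_*\sF=0$ you only see the \emph{untwisted} hypercohomology of each fiber.

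The fix, which is exactly what the paper means by ``the inductive scheme from Theorem~\ref{semi}'', is to use the full strength of $\sM_*(\sF)=0$ rather than just its consequence $\sM_*(Rf_*\sF)=0$. Vanishing of $\sM_*(\sF)$ is stable under tensoring $\sF$ with any rank-one local system $L$ on $G$ (this is a translation in $\spec\Gamma_G$). Hence by Lemma~\ref{iso} and the induction hypothesis on $G'$ you get $Rf_*(\sF\otimes_\bC L)=0$ for \emph{every} rank-one $L$. Restricting to a fiber $f^{-1}(y)\cong\bC^*$, and letting $L$ run over the local systems $L_{\lambda,1,\ldots,1}$, you obtain $H^*(\bC^*,\sF|_{f^{-1}(y)}\otimes_\bC L_\lambda)=0$ for all $\lambda\in\bC^*$, i.e.\ $\sM_*(\sF|_{f^{-1}(y)})=0$ on $\bC^*$. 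Now the affine torus case (Gabber--Loeser, Theorem~\ref{GL} or \cite[Proposition 3.4.6]{GL}) gives $\sF|_{f^{-1}(y)}=0$ for every $y$, hence $\sF=0$.

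One further remark: your treatment of the base case $\dim T=0$ is not quite right. The ``argument with a covering by affine opens and fiber cohomology'' does not obviously work for a compact abelian variety; the paper instead invokes the Riemann--Hilbert correspondence together with the fact that the Fourier--Mukai transform on $D^b_{coh}$ is an equivalence of categories (cf.\ \cite{Sch}), which immediately gives conservativity of $\sM_*$ on $D^b_c(A,\bC)$.
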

\begin{proof}
We can use the same argument as in the proof of \cite[Proposition 3.4.6]{GL}, together with our inductive scheme from Theorem \ref{semi}, to reduce the proof to the case when $G=A$ is an abelian variety. Then the assertion follows from the Riemann-Hilbert correspondence and the fact that the Fourier-Mukai transform is an equivalence of categories (see \cite{Sch}). 
\end{proof}

The following theorem provides a unification and generalization to the semi-abelian context of similar statements from the abelian case (cf. \cite[Theorem 2]{We0}, \cite[Proposition 10.1(a)]{KW}, \cite[Theorem 7.6]{Sch}) and from the affine torus case (cf. \cite[Theorem 5.1.1]{GL}), respectively.
\bt \label{simple} Let $G$ be a complex semi-abelian variety.   If $\sP \in \Perv(G, \bC)$ is a simple perverse sheaf on $G$ with $\chi(G, \sP)=0$, then there exists a positive dimensional semi-abelian subvariety $G^{\prime \prime}$ of $G$, a rank-one $\bC$-local system $L_\rho$ on $G$ and a simple perverse sheaf $ \sP^\prime$
on $G^\prime=G/G^{\prime \prime}$ with $\chi(G', \sP')\neq 0$, such that  
\be\label{pul} \sP\cong L_\rho \otimes_{\bC} f^* \sP^\prime[\dim G^{\prime \prime}],\ee with $f: G\to G'=G/G^{\prime \prime}$ denoting the quotient map.
Moreover, 
$$\sV^0(G,\sP)=\rho^{-1}\cdot \im (f^\#: \spec \Gamma_{G'} \to \spec \Gamma_{G})$$
is an irreducible linear subvariety. 
\et

\begin{proof}
Since $\chi(G, \sP)=0$, Corollary \ref{gvcor}(ii) yields that $\sV^0(G, \sP)\neq \spec \Gamma_G$. 
Proposition \ref{nonzero} and the propagation property in Theorem \ref{package} show that $\sV^0(G,\sP)$ is non-empty.   
Assume that $\sV^0(G,\sP)$ has codimension $d$, and let $V$ be an irreducible component of $\sV^0(G,\sP)$ of codimension exactly $d$. 
By Theorem \ref{linear},  $V$ is a linear variety. Without loss of generality, (after a suitable twist) we may assume that $V$ contains the constant sheaf. 
Then there exists a map of algebraic groups $f:G\to G'$ from $G$ to a semi-abelian variety $G'$
 such that $$V=f^\#(\spec \Gamma_{G'}).$$ Assume that the semi-abelian variety $G^{\prime \prime}=\ker(f)$ has the affine torus part $T''$ of dimension $m^{\prime\prime}$ and the abelian variety part $A''$ of dimension $g^{\prime\prime}$.  Then $d= m^{\prime\prime}+2g^{\prime\prime}$ and $\dim G^{\prime\prime}=m^{\prime\prime}+g^{\prime\prime}$. 
 
Write $f$ as a composition of an affine map $f_1: G\to G/T''$ of relative dimension $m''$ and a proper map $f_2: G/T''\to G/G''$ of relative dimension $g''$. By \cite[Corollary 5.2.14(ii) and Theorem 5.2.16(i)]{D2}, we have 
$$Rf_{1*}(\sP)\in\,{^p D}^{\geq -m''}(G/T'', \bC)\cap\, {^p D}^{\leq 0}(G/T'', \bC).$$
By \cite[Corollary 5.2.14]{D2}, using the properness of $f_2$, the $t$-amplitude of $Rf_{2*}$ is $[-g'', g'']$. Hence, we have
$$Rf_{*}(\sP)\in\,^p D^{\geq -m''-g''}(G', \bC)\cap\, ^p D^{\leq g''}(G', \bC).$$
Thus, the only possibly non-trivial perverse cohomology sheaves $^p \cH^i Rf_{*}(\sP)$  appears in the range $i\in [-m^{\prime\prime}-g^{\prime\prime}, g^{\prime\prime}]$. 
By the perverse hypercohomology spectral sequence, we have that 
$$\sV^{i}(G',Rf_*\sP) \subset  \bigcup_{j =-m^{\prime\prime}-g^{\prime\prime}}^{g^{\prime\prime}}\sV^{i-j}(G', \text{} ^p \cH^{j} Rf_*\sP).$$
By Theorem \ref{package}$(ii)$, this yields $\sV^i(G',Rf_*\sP)\neq \spec  \Gamma_{G'}$ for $i\notin [-m^{\prime\prime}-g^{\prime\prime}, g^{\prime\prime}]$. 
    
 On the other hand, for any $\bC$-coefficient local system $L$ on $G'$, we have the projection formula $$Rf_* \sP \otimes_\bC L \cong Rf_* (\sP \otimes_\bC f^* L),$$ which implies that $$f^\# (\sV^i(G',Rf_*\sP))= V \cap \sV^i(G,\sP).$$
By Theorem \ref{semi} and Proposition \ref{prop}$(iii)$, there exist integers $a_1$ and $a_2$ with $\sV^i(G',Rf_*\sP)= \spec \Gamma_{G'}$ for $i\in [a_1,a_2]$. So the interval $[-m^{\prime\prime}-g^{\prime\prime}, g^{\prime\prime}]$ contains the interval $[a_1,a_2]$. However, since the interval $[-m^{\prime\prime}-g^{\prime\prime}, g^{\prime\prime}]$ has length $d$ and $[a_1,a_2]$ has length at least $d$, we get $$[-m^{\prime\prime}-g^{\prime\prime}, g^{\prime\prime}]=[a_1,a_2].$$ 
 
 We next note that $\sV^{-m^{\prime\prime}-g^{\prime\prime}}(G',Rf_*\sP) =\spec \Gamma_{G'}$ if and only if
 \begin{center}
 $ ^p \cH^{-m^{\prime\prime}-g^{\prime\prime}} Rf_*(\sP)\neq 0$ \ and \ $\chi(G',\text{} ^p \cH^{-m^{\prime\prime}-g^{\prime\prime}} Rf_*(\sP)) \neq 0$.
\end{center}  
Indeed, by the perverse hypercohomology spectral sequence, we have that $$\sV^{-m^{\prime\prime}-g^{\prime\prime}}(G',Rf_*\sP) \subset  \bigcup_{i =-m^{\prime\prime}-g^{\prime\prime}}^{g^{\prime\prime}}\sV^{-m^{\prime\prime}-g^{\prime\prime}-i}(G', \text{} ^p \cH^{i} Rf_*\sP).$$
Moreover, by Theorem \ref{package}$(ii)$,  $\sV^{-m^{\prime\prime}-g^{\prime\prime}-i}(G', \text{} ^p \cH^{i} Rf_*\sP)$ has codimension at least 1 in $\spec \Gamma_{G'}$ for $i>-m^{\prime\prime}-g^{\prime\prime}$. Hence, for $\sV^{-m^{\prime\prime}-g^{\prime\prime}}(G',Rf_*\sP) =\spec \Gamma_{G'}$ the only possibility is that  $\sV^{0}(G', \text{} ^p \cH^{-m^{\prime\prime}-g^{\prime\prime}} Rf_*\sP)= \spec \Gamma_{G'}$. By Corollary \ref{gvcor}$(ii)$, this is equivalent to the fact that $\chi(G',\text{} ^p \cH^{-m^{\prime\prime}-g^{\prime\prime}} Rf_*(\sP)) \neq 0$.

Since $f$ is a smooth map of relative dimension $m^{\prime\prime}+g^{\prime\prime}$, 
 it follows from \cite[Page 111]{BBD} that there is a canonical monomorphism of perverse sheaves
\be\label{mn3} f^*\left( ^p \cH^{-m^{\prime\prime}-g^{\prime\prime}} Rf_*(\sP)\right)[m^{\prime\prime}+g^{\prime\prime}]\hookrightarrow \sP.\ee
Since $\sP$ is simple, the monomorphism (\ref{mn3}) is an isomorphism.  So $$\sP':=\,^p \cH^{-m^{\prime\prime}-g^{\prime\prime}} Rf_*(\sP) \in \Perv(G',\bC)$$ satisfies all properties required in the theorem.

For any rank-one $\bC$-local system $L \notin V$, since $L\vert_{f^{-1}(y)}$ is  a non-trivial local system for any point $y\in G'$, we have $(Rf_* L)_y=0$ for any point $y\in G'$. Hence $Rf_* L=0$. Therefore, 
 $$ Rf_* (\sP\otimes_\bC L)=  Rf_* (f^* \sP^\prime[\dim G^{\prime \prime}]\otimes _\bC L)= \sP^\prime[\dim G^{\prime \prime}] \otimes_\bC  Rf_* L=0.$$
Hence $H^i(G, \sP\otimes _\bC L)= H^i(G', Rf_* (\sP\otimes _\bC L))=0$ for all $i$. This shows that $\sV^0(G, \sP)= V$, and hence irreducible.
\end{proof}

\bc \label{survive} If  $V$ is an irreducible component of $\sV^0(G,\sP)$ of codimension $d$, then $V$ is also an irreducible component of $\sV^{i}(G,\sP)$ exactly for $i\in [-m^{\prime\prime}-g^{\prime\prime}, g^{\prime\prime}]$, where $m^{\prime\prime}$ and $g^{\prime\prime} $ are the dimensions of the affine part and, resp., the abelian part of $G^{\prime\prime}$, as introduced in Theorem \ref{simple}. In particular, $d=m^{\prime\prime}+2g^{\prime\prime}$.
\ec
\begin{proof}
Indeed, in the proof of Theorem \ref{simple},  one gets the equality
 $$[-m^{\prime\prime}-g^{\prime\prime}, g^{\prime\prime}]=[a_1,a_2]$$
 without making use of the fact that the perverse sheaf $\sP$ was assumed to be simple.  So the assertion holds for any perverse sheaf.
\end{proof}

\br \label{codim} Assume that $G=A$ is an abelian variety.   If  $V$ is an irreducible component of $\sV^0(A,\sP)$ of codimension $d$, the structure theorem shows that $d$ has to be even and $g^{\prime\prime}=\frac{d}{2}$. So $V$ is  an irreducible component of $\sV^{i}(A,\sP)$ exactly for $i\in [-\dfrac{d}{2}, \dfrac{d}{2}]$. 
Moreover, if $\sV^0(A,\sP)$ has codimension $d> 0$, then $$\sV^0(A,\sP)= \sV^{\pm 1}(G,\sP)= \cdots = \sV^{\pm d/2}(A,\sP) \neq \sV^{\pm (1+d/2)}(A,\sP).$$ 
 Similar results for the case of a complex affine torus have been obtained in \cite[Theorem 1.2$(iv)$]{LMW3}.
\er

\section{Characterization of $\bC$-perverse sheaves} \label{cpv}
In this section, we use the Structure Theorem \ref{linear}  
and more refined codimension lower bounds for cohomology jumping loci to give a complete characterization of $\bC$-perverse sheaves on semi-abelian varieties. Results in this section generalize the corresponding results for perverse sheaves on abelian varieties obtained by Schnell (cf. \cite[Theorem 7.4]{Sch}), as well as the results of Gabber-Loeser for perverse sheaves on complex affine tori (cf. \cite[Theorem 3.4.7]{GL}).

Throughout this section we fix $\kk=\bC$. Let $G$ be as before a complex semi-abelian variety of dimension $m+g$, with $\Gamma_G:=\bC[\pi_1(G)]\cong \bC[t_1^{\pm 1}, \cdots, t_{m+2g}^{\pm 1}].$ Here $g$ denotes the complex dimension of the abelian part, and $m$ is the dimension of the affine torus part.

Let $\sF\in D^b_c(G,\bC)$ be a  bounded constructible complex of $\bC$-sheaves on $G$, and let $V$ be an irreducible component of $\sV^i(G,\sF)$. By the Structure Theorem \ref{linear}, we know that $V$ is linear, and hence there is a surjective homomorphism $f(V):G \to G'(V)$ of semi-abelian varieties such that, up to a translate, $V$ is equal to the image of 
$$f(V)^\#: \spec \Gamma_{G'(V)} \to \spec \Gamma_{G}.$$
Let 
$$1\to T'(V)\to G'(V)\to A'(V)\to 1$$
be the group extension corresponding to $G'(V)$, with $T'(V)$ a complex affine torus and $A'(V)$ a complex abelian variety. 
Let $G''(V):=\ker f(V)$, with $T''(V)$ and $A''(V)$ denoting the affine torus and, resp., the  abelian variety part of $G''(V)$.
To formulate the results of this section, we introduce the following terminology. 
\bd \label{GA}
In the above notations, we define the \emph{semi-abelian dimension} of $\sV^i(G,\sF)$ by 
$$\dim_{sa}\sV^i(G, \sF)=\max_{V}\{\dim G'(V)\}$$
and the \emph{semi-abelian codimension} of $\sV^i(G,\sF)$ by 
$$\codim_{sa} \sV^i(G,\sF) =\dim G-\dim_{sa}\sV^i(G,\sF)= \min\limits_{V} \{\dim G''(V)\},$$
where $V$ runs over all irreducible components of $ \sV^i(G,\sF)$.

Similarly, we define the \emph{abelian dimension} of $\sV^i(G, \sF)$ by 
$$\dim_{a}\sV^i(G, \sF)=\max_{V}\{\dim A'(V)\},$$
and its \emph{abelian codimension} by 
$$\codim_{a} \sV^i(G,\sF) = \dim A-\dim_{a} \sV^i(G,\sF)=\min\limits_{V} \{\dim A''(V)\},$$
where $V$ runs over all irreducible components of $ \sV^i(G,\sF)$.
\ed

\br
Let $V$ be a nonempty linear subvariety of $\spec \Gamma_G$. 
\begin{enumerate}
\item If $G=T$ is a complex affine torus, then $\dim_{sa}(V)=\dim(V)$, $\codim_{sa}(V)=\codim(V)$, and $\dim_{a}(V)=\codim_{a}(V)=0$, $\dim_{a}(\emptyset)=-\infty$, $\codim_a(\emptyset)=\infty$. 
\item If $G=A$ is a complex abelian variety, then $\dim_{sa}(V)=\dim_{a}(V)=\frac{1}{2}\dim(V)$, $\codim_{sa}(V)=\codim_{a}(V)=\frac{1}{2}\codim(V)$.
\end{enumerate}
\er

The first result of this section provides more refined codimension lower bounds for the cohomology jump loci of $\bC$-perverse sheaves on $G$, as follows.

\bp\label{bdeq}
 Let $\sP$ be a $\bC$-perverse sheaf on the semi-abelian variety $G$.
\begin{enumerate}
\item
For any $   i\geq 0$, we have the following abelian codimension bound:
$$\codim_{a} \sV^i(G, \sP) \geq i.$$ 
Moreover, there exist $i_+\geq 0$, such that $\codim_{a} \sV^{i_+}(G, \sP) = i_+.$
\item  
For any $i\leq 0$, we have the following semi-abelian codimension bound:
$$\codim_{sa} \sV^i(G,\sP)\geq -i.$$
Moreover, there exist $i_-\leq 0$, such that $\codim_{sa} \sV^{i_-}(G, \sP) = i_-.$
\end{enumerate}
\ep
\begin{proof}
We first prove the existence of $i_+$ and $i_-$. If $\sV^0(G, \sP)=\spec \Gamma_G$ is the whole moduli space, one can take $i_+=i_-=0$. Otherwise, let $V $ be an irreducible component of $\sV^0(G,\sP)$. One can take $i_+=\dim A''(V)$ and $i_-=-\dim G''(V)$, with $A''(V)$ and $G''(V)$ as in Definition \ref{GA}. 
Then the desired equalities follow from Corollary \ref{survive}. 

Next, we prove the codimension lower bounds. %The abelian category of $\bC$-perverse sheaves is Artinian and Noetherian,  hence there is a well-defined notion of length of $\bC$-perverse sheaves.   
By induction on the length of $\bC$-perverse sheaves, we may assume that $\sP$ is simple since the conclusion behaves well under exact sequences.

We prove the codimension bounds by induction on the dimension of the torus $T$.
If $\dim T=0$, then $G$ is an abelian variety, and the assertions are equivalent to Theorem \ref{Schnell}. For the induction step we proceed as follows.

Let  $f:G \to G'$ be the projection map defined as before, by forgetting the first coordinate of the affine torus $T$. Set $$V_{\lambda}:=\{t\in \spec \Gamma_G \mid t_1=\lambda\},$$ which is a codimension-one subtorus of $\spec \Gamma_G$, and note that 
\be\label{uni}\bigcup_{\lambda\in \bC^*} V_\lambda= \spec \Gamma_G.
\ee 
The map $f:G\to G'$ induces an embedding on the moduli spaces: $$f^\#: \spec \Gamma_{G'} \to \spec \Gamma_{G},$$
whose image coincides with $V_1$, i.e.,  $f^\#(\spec \Gamma_{G'})\cong V_1$.

If $^p \cH^{-1} Rf_*(\sP)\neq 0$, then we get as in the proof of Theorem \ref{semi}$(i)$ an isomorphism:
$$f^*\left( ^p \cH^{-1} Rf_*(\sP)\right)[1]\cong \sP.$$
In other words, fixing a splitting $G\cong G'\times \cc^*$ as topological spaces, we can express $\sP$ as an external product 
$$\sP\cong\, ^p \cH^{-1} Rf_*(\sP)\boxtimes \bC_{\cc^*}[1].$$
Then the K\"unneth formula (see, e.g., \cite[Theorem 4.3.14]{D2}) yields that 
$$\sV^i(G, \sP)=\bigcup_{k}\sV^{i-k}(G', {^p \cH}^{-1} Rf_*(\sP))\times \sV^{k}(\cc^*, \cc_{\cc^*}[1]),$$
where we identify $\spec \Gamma_G$ with $\spec \Gamma_{G'}\times \spec \Gamma_{\cc^*}$ by the natural isomorphism induced by $\pi_1(G)\cong \pi_1(G')\times \pi_1(\cc^*)$. 
Since 
$$\sV^{k}(\cc^*, \bC_{\cc^*}[1])=
\begin{cases}
\emptyset &\text{ if } k\neq -1, 0,\\
\{\mathbf{1}\} &\text{ if } k=-1, 0,
\end{cases}
$$
we have
\be\label{cod}
\sV^i(G,\sP)= f^\#(\sV^{i}(G', {^p \cH}^{-1} Rf_*(\sP))) \cup f^\# ( \sV^{i+1}(G', {^p \cH}^{-1} Rf_*(\sP))).
\ee
In particular, in this case, $\sV^i(G,\sP) \subset V_1$. Since $G$ and $G'$ have the same abelian part, for any $i\geq 0$ we have that 
\begin{multline}\label{cod0}
\codim_{a} \sV^i(G,\sP) = \\ \min \{ \codim_{a} \sV^{i}(G', {^p \cH}^{-1} Rf_*(\sP)) , \codim_{a} \sV^{i+1}(G', {^p \cH}^{-1} Rf_*(\sP))\}.
\end{multline}
On the other hand, by the induction hypothesis for the perverse sheaf $^p \cH^{-1} Rf_*(\sP)$ on $G'$, for any $i\geq 0$ we have 
\be\label{cod1} \codim_{a} \sV^{i}(G', {^p \cH}^{-1} Rf_*(\sP))\geq i,
\ee
and
\be\label{cod2}\codim_{a} \sV^{i+1}(G', {^p \cH}^{-1} Rf_*(\sP))\geq i+1.
\ee
Therefore, by (\ref{cod0}), (\ref{cod1}) and (\ref{cod2}), we get that for any $i\geq 0$, 
$$
\codim_{a}\sV^i(G,\sP)  \geq i.
$$

Similarly, by (\ref{cod}), we have
\begin{equation}\label{cod3}
\dim_{sa} \sV^i(G,\sP) = \max \{ \dim_{sa} \sV^{i}(G', {^p \cH}^{-1} Rf_*(\sP)) , \dim_{sa} \sV^{i+1}(G', {^p \cH}^{-1} Rf_*(\sP))\}.
\end{equation}
By the induction hypothesis for the perverse sheaf $^p \cH^{-1} Rf_*(\sP)$ on $G'$, for any $i\leq 0$ we have that 
\be\label{cod4} \dim_{sa} \sV^{i}(G', {^p \cH}^{-1} Rf_*(\sP))\leq \dim G'+ i,
\ee
and
\be\label{cod5}\dim_{sa} \sV^{i+1}(G', {^p \cH}^{-1} Rf_*(\sP))\leq \dim G'+i+1=\dim G+i.
\ee
By (\ref{cod3}), (\ref{cod4}) and (\ref{cod5}), we get that for any $i\leq 0$, 
$$
\dim_{sa}\sV^i(G,\sP) \leq \dim G +i,
$$
and hence
$$
\codim_{sa}\sV^i(G,\sP) \geq -i.
$$

So far, we proved the codimension lower bounds assuming $^p \cH^{-1} Rf_*(\sP)\neq 0$. 
More generally, if there exists a rank-one $\bC$-local system $L$ on $G$, such that $^p \cH^{-1} Rf_*(\sP\otimes_\bC L)\neq 0$, then one obtains the same codimension lower bounds. In fact, tensoring with $L$ induces a translation on the cohomology jump loci, and in particular preserves the (semi-)abelian codimensions. 

Assume next that there is no rank-one local system $L$ on $G$ satisfying the condition $^p \cH^{-1} Rf_*(\sP\otimes_\bC L)\neq 0$. For any $\lambda\in \bC^*$, let us choose a rank-one local system $L_\lambda$ whose corresponding point in $\spec \Gamma_G$ is contained in $V_\lambda$. 

By the above assumption, $^p \cH^{-1} Rf_*(\sP\otimes_\bC L_\lambda)=0$, and hence $Rf_* (\sP\otimes_\bC L_\lambda)$ is a perverse sheaf on $G'$. By the projection formula, we have that
$$\sV^i(G,\sP\otimes_\bC L_\lambda)\cap V_1 = f^\#(\sV^i(G', Rf_* (\sP\otimes_\bC L_\lambda))),$$
or, equivalently,
$$\sV^i(G,\sP)\cap V_{\lambda^{-1}} =\lambda^{-1}\cdot f^\#(\sV^i(G', Rf_* (\sP\otimes_\bC L_\lambda))).$$
By the induction hypothesis for the perverse sheaf $Rf_* (\sP\otimes_\bC L_\lambda)$ on $G'$, we have that
\begin{equation}\label{eq_a}
\codim_{a} (\sV^i(G,\sP)\cap V_{\lambda^{-1}}) = \codim_{a} \sV^i (G', Rf_*( \sP\otimes_\bC L_\lambda)) \geq i
\end{equation}
for any $i\geq 0$, and
\begin{equation}\label{eq_sa}
  \dim_{sa} (\sV^i(G,\sP)\cap V_{\lambda^{-1}}) = \dim_{sa} \sV^i (G', Rf_* (\sP\otimes_\bC L_\lambda)) \leq \dim G'+i
\end{equation}
for any $i\leq 0$. 
We can interpret (\ref{eq_sa}) as saying that the semi-abelian codimension of $\sV^i(G,\sP)\cap V_{\lambda^{-1}}$ within $V_{\lambda^{-1}}$ is no less than $i$. Since (\ref{eq_a}) holds for any $\lambda\in \bC^*$, by (\ref{uni}), we have
$$\codim_{a} (\sV^i(G,\sP))\geq i$$
for any $i\geq 0$. Similarly, since (\ref{eq_sa}) holds for any $\lambda\in \bC^*$, by (\ref{uni}), we get that 
$$ \codim_{sa} (\sV^i(G,\sP))\geq -i$$
for any $i\leq 0$. 
\end{proof}

\br If $\sP$ is a simple $\bC$-perverse sheaf on $G$ with $\chi(G,\sP) \neq 0$, then it can be shown by using methods similar to those in Theorem \ref{simple} that the following stronger codimension bounds hold:
$$\codim_{a} \sV^i(G, \sP) \geq i+1 , \ {\rm for} \ i >0,$$ 
and 
$$\codim_{sa} \sV^i(G,\sP)\geq -i+1  , \ {\rm for} \ i <0.$$
This generalizes a fact obtained by Schnell in the abelian context, cf. \cite[Section 5]{Sch}.
\er

\begin{cor}\label{inequal}
Let $\sF\in D^b_c(G, \bC)$ be a bounded $\bC$-constructible complex on $G$. 
\begin{enumerate}
\item If $\sF\in \,^p D^{\leq 0}(G, \bC)$, that is, ${^p\sH}^j(\sF)=0$ for any $j>0$, then for any $i\geq 0$,
$$\codim_{a} \sV^i(G, \sF) \geq i.$$
\item If $\sF\in \,^p D^{\geq 0}(G, \bC)$, that is, ${^p\sH}^j(\sF)=0$ for any $j<0$, then for any $i\leq 0$,
$$\codim_{sa} \sV^i(G, \sF) \geq -i.$$
\end{enumerate}
\end{cor}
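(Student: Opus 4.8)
The plan is to reduce the statement for an arbitrary constructible complex to the perverse case, Proposition \ref{bdeq}, by filtering $\sF$ through its perverse cohomology sheaves. For part (1), fix a rank-one local system $L_\rho$ and run the perverse hypercohomology spectral sequence for $\sF\otimes_\bC L_\rho$. Since $-\otimes_\bC L_\rho$ is $t$-exact for the perverse $t$-structure, ${}^p\cH^q(\sF\otimes_\bC L_\rho)\cong{}^p\cH^q(\sF)\otimes_\bC L_\rho$, and $\sF\in{}^pD^{\leq 0}(G,\bC)$ forces ${}^p\cH^q(\sF)=0$ for $q>0$. Exactly as in the proof of Theorem \ref{simple}, the spectral sequence gives the containment of cohomology jump loci
$$\sV^i(G,\sF)\subseteq\bigcup_{q\leq 0}\sV^{i-q}\bigl(G,{}^p\cH^q(\sF)\bigr).$$
For $i\geq 0$ and $q\leq 0$ we have $i-q\geq i\geq 0$, so Proposition \ref{bdeq}(1) applied to the perverse sheaf ${}^p\cH^q(\sF)$ yields $\codim_{a}\sV^{i-q}(G,{}^p\cH^q(\sF))\geq i-q\geq i$. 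Part (2) is the mirror argument: here ${}^p\cH^q(\sF)=0$ for $q<0$, the union runs over $q\geq 0$, and for $i\leq 0$, $q\geq 0$ one has $i-q\leq 0$, so Proposition \ref{bdeq}(2) gives $\codim_{sa}\sV^{i-q}(G,{}^p\cH^q(\sF))\geq-(i-q)\geq-i$.

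It remains to transfer these bounds from the pieces $\sV^{i-q}(G,{}^p\cH^q(\sF))$ to $\sV^i(G,\sF)$, which is only \emph{contained} in their union. For this I will isolate the elementary monotonicity statement: if $Z\subseteq Z'$ are finite unions of linear subvarieties of $\spec\Gamma_G$, then $\codim_{a}Z\geq\codim_{a}Z'$ and $\codim_{sa}Z\geq\codim_{sa}Z'$. Granting this, since $\sV^i(G,\sF)$ lies inside the displayed union, $\codim_{a}\sV^i(G,\sF)$ is at least the minimum over $q\leq 0$ of $\codim_{a}\sV^{i-q}(G,{}^p\cH^q(\sF))$, which is $\geq i$ by the previous paragraph; this proves (1), and (2) follows the same way with $\codim_{sa}$ and $-i$.

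The monotonicity statement reduces, by comparing irreducible components, to the following claim about two linear subvarieties $V\subseteq W$: in the notation of Definition \ref{GA}, $\dim A'(V)\leq\dim A'(W)$ and $\dim G'(V)\leq\dim G'(W)$ (equivalently $\dim A''(V)\geq\dim A''(W)$ and $\dim G''(V)\geq\dim G''(W)$). After translating we may assume $V$ and $W$ both contain the identity character, so that $V=\im(f(V)^\#)=:S_V$ and $W=\im(f(W)^\#)=:S_W$ are subtori of $\spec\Gamma_G$ with $S_V\subseteq S_W$. The key point is that $\dim A'(V)$ and $\dim G'(V)$ can be read off from $S_V$ alone: the composite $G\xrightarrow{f(V)}G'(V)\to A'(V)$ kills the affine torus $T$ and so factors through a surjection $A\twoheadrightarrow A'(V)$, which dualizes to the identity $S_V\cap\spec\Gamma_A=\im\bigl(\spec\Gamma_{A'(V)}\hookrightarrow\spec\Gamma_G\bigr)$, where $\spec\Gamma_A\subseteq\spec\Gamma_G$ is the subtorus dual to $\pi_1(G)\twoheadrightarrow\pi_1(A)$; hence $\dim A'(V)=\frac{1}{2}\dim(S_V\cap\spec\Gamma_A)$ and $\dim G'(V)=\dim S_V-\dim A'(V)$, both non-decreasing in $S_V$. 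I expect this last piece of bookkeeping — recovering the abelian and semi-abelian dimensions of a linear subvariety intrinsically from the subtorus it defines, together with keeping track of the translations — to be the only genuine subtlety; the rest is a formal consequence of the spectral sequence and Proposition \ref{bdeq}.
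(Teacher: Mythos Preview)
Your proof is correct and follows essentially the same route as the paper: the perverse hypercohomology spectral sequence gives the inclusion $\sV^i(G,\sF)\subseteq\bigcup_j\sV^{i-j}(G,{}^p\sH^j(\sF))$, and then Proposition~\ref{bdeq} bounds each piece. The only difference is that you make explicit the monotonicity step (that $\codim_a$ and $\codim_{sa}$ are non-increasing under inclusion of linear loci), which the paper uses without comment in the inequality $\codim_a\sV^i(G,\sF)\geq\min_{j\leq 0}\codim_a\sV^{i-j}(G,{}^p\sH^j(\sF))$; your justification of this via $G''(W)\subseteq G''(V)$ when $S_V\subseteq S_W$ is fine, and indeed for $\dim_{sa}$ the cleanest argument is the one you hint at: $G''(V)=\bigcap_{\rho\in S_V}\ker\rho$, so $S_V\subseteq S_W$ forces $G''(W)\subseteq G''(V)$ and hence $\dim G''(W)\leq\dim G''(V)$.
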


\begin{proof} 
Given any rank-one $\bC$-local system $L$ on $G$, notice that
$$^p\sH^j(\sF\otimes_\bC L)=\,^p\sH^j(\sF)\otimes_\bC L. $$
Therefore, by the perverse cohomology spectral sequence
$$E_2^{i-j, j}=H^{i-j}(G, \,^p\sH^j(\sF)\otimes_\bC L)\Rightarrow H^{i}(G, \sF\otimes_\bC L),$$
we get an inclusion
\be\label{co1}
\sV^i(G, \sF) \subset \bigcup_j \sV^{i-j}(G, {^p\sH}^j(\sF)).
\ee
If $\sF\in \,^p D^{\leq 0}(G, \bC)$, then
$$\codim_{a} \sV^i(G, \sF)   \geq \min_{j\leq 0} \codim_{a} \sV^{i-j}(G, {^p\sH}^j(\sF)) \geq \min_{j\leq 0} (i-j) \geq i,$$
where the second inequality follows from Proposition \ref{bdeq} (1). Similarly, if $\sF\in \,^p D^{\geq 0}(G, \bC)$, Proposition \ref{bdeq} (2) yields that
\[
\codim_{sa} \sV^i(G, \sF)   \geq \min_{j\geq 0} \codim_{sa} \sV^{i-j}(G, {^p\sH}^j(\sF)) \geq \min_{j\geq 0} (j-i) \geq -i.\qedhere
\]
\end{proof}

The main result of this section provides a complete description of $\bC$-perverse sheaves on a semi-abelian variety $G$ in terms of their cohomology jump loci, as follows. 
\begin{theorem} \label{characterization}
Let $\sF\in D^b_c(G, \bC)$ be a bounded $\bC$-constructible complex on $G$. We have:
\begin{enumerate}
\item[(a)] $\sF\in \,^p D^{\leq 0}(G, \bC) \ \iff \ \codim_{a} \sV^i(G, \sF) \geq i \text{ for any } i\geq 0,$
\item[(b)] $\sF\in \,^p D^{\geq 0}(G, \bC) \ \iff \ \codim_{sa} \sV^i(G, \sF) \geq -i \text{ for any } i\leq 0.$
\end{enumerate}
Thus, $\sF$ is a $\bC$-perverse sheaf on $G$ if and only if the following two conditions are satisfied:
\begin{enumerate}
\item $\codim_{a} \sV^i(G, \sF) \geq i$ for any $i\geq 0$,
\item $\codim_{sa} \sV^i(G, \sF) \geq -i$ for any $i\leq 0$. 
\end{enumerate}
\end{theorem}

\begin{proof}
$(a)$ The implication $\Rightarrow$ follows from Corollary \ref{inequal}. For the converse, let $\sF\in D^b_c(G,\bC)$ be a  bounded constructible complex satisfying the abelian codimension bound in nonnegative degrees $i\geq 0$. Suppose $\sF\notin \,^p D^{\leq 0}(G, \bC)$. 

Let $i_0$ be the largest positive integer such that $^p \sH^{i_0}(\sF)\neq 0$. 
Then there is a morphism in $D^b_c(G,\bC)$,
$$\sF\to \,^p \sH^{i_0}(\sF)[-i_0]$$
inducing an isomorphism on the $i_0$-th perverse cohomology. {Let $\sF_0[1]$ be the mapping cone of the above morphism. Then we have the distinguished triangle in $D^b_c(G,\bC)$:}
%$$\sF_0\to \sF\to \,^p \sH^{i_0}(\sF)[-i_0]\xrightarrow[]{+1}$$
%or, equivalently,
$$\sF\to \,^p \sH^{i_0}(\sF)[-i_0]\to \sF_0[1]\xrightarrow[]{+1}.$$
By tensoring with any rank-one local system $L$, and considering the associated cohomology long exact sequence, we get the following inclusion of cohomology jump loci
\begin{equation}\label{eq_0}
\sV^{k-i_0}(G, \,^p \sH^{i_0}(\sF))\subset \sV^{k}(G, \sF)\cup \sV^{k+1}(G, \sF_0),
\end{equation}
for any $k\geq i_0$. 
Since $\sF_0=\,^p\tau^{<i_0}\sF_0$, we have by Corollary \ref{inequal}, after a shift of degree, that 
\begin{equation}\label{eq_1}
\codim_{a} \sV^{k+1}(G, \sF_0) \geq k-i_0+2,
\end{equation}
for any   $k\geq i_0$. Since $^p \sH^{i_0}(\sF)$ is a nonzero perverse sheaf, by Proposition \ref{bdeq} (1), there exists $k_0\geq i_0$ such that 
$$\codim_{a} \sV^{k_0-i_0}(G, \,^p \sH^{i_0}(\sF)) = k_0-i_0.$$
Plugging in $k=k_0$ in (\ref{eq_0}) and (\ref{eq_1}), we get that $\codim_{a} \sV^{k_0}(G, \sF)\leq k_0-i_0<k_0$, thus contradicting the hypothesis.

$(b)$ The implication $\Rightarrow$ follows from Corollary \ref{inequal}. For the converse, let $\sF\in D^b_c(G,\bC)$ be a  bounded constructible complex satisfying the semi-abelian codimension bound in nonpositive degrees $i\leq 0$. Suppose $\sF\notin \,^p D^{\geq 0}(G, \bC)$. 

Let $i_1$ be the smallest negative integer such that $^p \sH^{i_1}(\sF)\neq 0$. Then there is a morphism in $D^b_c(G,\bC)$,
$$^p \sH^{i_1}(\sF)[-i_1]\to \sF$$
inducing an isomorphism on the $i_1$-th perverse cohomology. {Let $\sF_1$ be the mapping cone of the above morphism. By turning the resulting triangle, 
we get a distinguished triangle in $D^b_c(G,\bC)$:}
%$$^p \sH^{i_1}(\sF)[-i_1]\to\sF\to \sF_1\xrightarrow[]{+1},$$
%or, equivalently,
\begin{equation}%\label{eq_2}
\sF_1[-1]\to \,^p \sH^{i_1}(\sF)[-i_1]\to\sF\xrightarrow[]{+1}.
\end{equation}
By using the same argument as above, we have an inclusion
\begin{equation}\label{eq_3}
\sV^{k-i_1}(G, \,^p \sH^{i_1}(\sF))\subset \sV^{k}(G, \sF)\cup \sV^{k-1}(G, \sF_1),
\end{equation}
and inequality
\begin{equation}\label{eq_2bis}\codim_{sa} \sV^{k-1}(G,\sF_1)\geq -k+i_1+2,\end{equation}
for any $k\leq i_1$. Since $^p \sH^{i_1}(\sF)$ is a nonzero perverse sheaf, by Proposition \ref{bdeq} (2), there exists $k_1\leq i_1$ such that 
$$\codim_{sa} \sV^{k_1-i_1}(G, \,^p \sH^{i_1}(\sF)) = i_1-k_1.$$
Plugging in $k=k_1$ in (\ref{eq_3}) and (\ref{eq_2bis}), we get that $\codim_{sa} \sV^{k_1}(G, \sF) \leq  i_1-k_1<-k_1$. This contradicts the hypothesis. 
\end{proof}

\br\label{tstructure}
In general, one can not construct a $t$-structure on $D^b_{coh}(\Gamma_G)$ as in \cite{AB} such that the Mellin transformation $\sM_*: D^b_c(G, \bC)\to D^b_{coh}(\Gamma_G)$ is $t$-exact. In fact, let $A$ be an abelian variety of dimension two, let $T=(\cc^*)^4$ be the affine torus and let $G=T\times A$ be the splitting semi-abelian variety. Pulling back via the projections, we can consider $\spec \Gamma_A$ and $\spec \Gamma_T$ as linear subvarieties of $\spec \Gamma_G$. According to \cite{AB} (see also \cite{Kas}), the existence of such a $t$-structure on $D^b_{coh}(\Gamma_G)$ is equivalent to the existence of a  monotone and comonotone perversity function $p$ extending $-\dim_a$ on $\spec\Gamma_G$ (regarded as spectrum instead of maximum spectrum). By using Bertini's theorem repeatedly, we can construct a $5$-dimensional irreducible subvariety $Z$ containing both $\spec \Gamma_T$ and $\spec \Gamma_A$. Since $\dim_a \spec\Gamma_A=2$ and $\dim_a\spec\Gamma_T=0$, the monotonicity of $p$ implies that 
$$p(x_Z)\leq p(x_{\spec\Gamma_A})= -\dim_a \spec\Gamma_A=-2$$
and the comonotonicity of $p$ implies that 
$$p(x_Z)\geq p(x_{\spec\Gamma_T})-1= -\dim_a \spec\Gamma_T-1=-1$$
where $x_Z$, $x_{\spec\Gamma_A}$ and $x_{\spec\Gamma_T}$ are the generic points of $Z$, $\spec\Gamma_A$ and $\spec\Gamma_T$ as subschemes of $\spec\Gamma_G$, respectively. Such a perversity function can not exist. 
\er

The following  consequence of Theorem \ref{characterization} complements the results of Gabber-Loeser \cite{GL} (see Theorem \ref{GL}) on characterization of perverse sheaves on complex affine tori. 
\bc\label{specialize}
Suppose $G=T$ is a complex affine torus. A constructible complex $\sF\in D^b_c(T, \bC)$ is perverse on $T$ if and only if the following conditions hold.
\begin{enumerate}
\item For any $i>0$, $\sV^i(T, \sF)=\emptyset$. 
\item For any $i\leq 0$, $\codim \sV^i(T, \sF)\geq -i$. 
\end{enumerate}
\ec

\br
Item (1) of Corollary \ref{specialize} is equivalent to Artin's vanishing theorem for perverse sheaves on $T$ (see, e.g., \cite[Corollary 5.2.18]{D2}). 
\er

\bc \label{union}  
Let $0\to\sP' \to \sP \to \sP''\to 0 $ be a  short exact sequence of  perverse sheaves in $ \Perv(G,\bC)$. Then 
$$ \sV^0(G,\sP)= \sV^0(G,\sP')\cup \sV^0(G, \sP''). $$
\ec
\begin{proof} By tensoring the given short exact sequence of perverse sheaves with any rank one local system $L$, and considering the associated cohomology long exact sequence, we have the following inclusion of cohomology jump loci
$$ \sV^0(G,\sP)\subset \sV^0(G,\sP')\cup \sV^0(G, \sP'') .$$

On the other hand, let $V$ be an irreducible component of $ \sV^0(G,\sP')\cup \sV^0(G, \sP'')$ such that $V\not\subset \sV^0(G,\sP)$. Suppose that $V$ is an irreducible component of $\sV^0(G,\sP') $.  
By Proposition \ref{bdeq} (2), there exists $i_1\leq 0$ such that  \be\label{zer} \codim_{sa} \sV^{i_1}(G, \sP') = -i_1.\ee
After turning once the distinguished triangle associated to the given short exact sequence of perverse sheaves, 
we have that
\be\label{zer9} \sV^{i_1} (G, \sP') \subset  \sV^{i_1} (G, \sP) \cup  \sV^{i_1-1} (G, \sP'').\ee
By Proposition \ref{bdeq}, we get that $\codim_{sa} \sV^{i_1-1}(G, \sP'')\geq 1-i_1>-i_1$, thus by (\ref{zer}) and (\ref{zer9}) we have that $ V \subset \sV^{i_1} (G, \sP)$. The propagation property of Theorem \ref{package} then yields that $ V \subset \sV^{i_1} (G, \sP) \subset \sV^{0} (G, \sP)$, which contradicts our hypothesis.  
If $V$ is an irreducible component of $\sV^0(G,\sP'') $, the claim follows in a similar way  by using abelian codimension bound of Proposition \ref{bdeq}.  
\end{proof}

We conclude this section with the following result, which will be needed in the applications discussed in the next section. 

\bp \label{Alexander} Let $\sF\in D^b_c(G, \bC)$ be a $\bC$-constructible complex on $G$. Then we have the  equality
$$ \bigcup_i \sV^i(G,\sF) =\bigcup_j \sV^0(G, \,^p\sH^j(\sF)).$$
\ep
\begin{proof}
As in the proof of Corollary \ref{inequal}, the perverse cohomology spectral sequence
$$E_2^{i-j, j}=H^{i-j}(G, \,^p\sH^j(\sF)\otimes_\bC L)\Rightarrow H^{i}(G, \sF\otimes_\bC L),$$
yields an inclusion
 $$\bigcup_i \sV^i(G,\sF) \subset \bigcup_i \left(\bigcup_j \sV^{i-j} (G, \,^p\sH^j(\sF))\right)= \bigcup_j \sV^0(G, \,^p\sH^j(\sF)),$$ where the last equality follows from the propagation property of Theorem \ref{package}.

On the other hand, let $V$ be an irreducible component of $\bigcup_j \sV^0(G, \,^p\sH^j(\sF)) $.  Let $j_0 \in \zz$ be largest such that $V$ is  an irreducible component of $\sV^0(G, \,^p \sH^{j_0}(\sF))$.  Let $k=\codim_a V$ be the abelian codimension of $V$.  By Corollary \ref{survive}, we have
$$V\subset \sV^{k}(G, \,^p \sH^{j_0}(\sF)).$$
By the definition of $j_0$ and the propagation property of Theorem \ref{package}, we have
\begin{equation}\label{general1}
V\not\subset \sV^i(G, \,^p\sH^j(\sF))
\end{equation}
for any $i$ and $j>j_0$. Furthermore, by the abelian codimension bound in Theorem \ref{characterization}, 
\be\label{general2}
V\not \subset \sV^i(G, \,^p\sH^j(\sF))
\ee
for any $j$ and $i>k$. Now, let $L$ be a $\bC$-local system on $G$ corresponding to a general point in $V$. By (\ref{general1}) and (\ref{general2}), if $j>j_0$ or $i>k$, then
$$H^i(G, ^p\sH^j(\sF)\otimes_\bC L)=0.$$
Therefore, the above spectral sequence satisfies
$$H^k(G, ^p\sH^{j_0}(\sF)\otimes_\bC L)=E_2^{k, j_0}=E_\infty^{k, j_0}\neq 0.$$
Thus, $V\subset\sV^{k+j_0}(G,\sF)\subset\bigcup_i \sV^i(G,\sF)$, and hence
\[  \bigcup_j \sV^0(G, \,^p\sH^j(\sF)) \subset \bigcup_i \sV^i(G,\sF).\qedhere
\]
\end{proof}

%%%%%%%%%%%%%%%%%%%%%%%
\section{Applications}\label{appl}
In this section, we present applications of our main results to the cohomology jump loci of quasi-projective manifolds, to the topology of the Albanese map, and to the study of abelian duality spaces.

\subsection{Cohomology jump loci of quasi-projective manifold}\label{cjl}
In this subsection, we give some applications of Theorem \ref{package} and Theorem \ref{characterization} to the study of cohomology jump loci of smooth complex quasi-projective varieties. 

Let $X$ be a smooth connected complex quasi-projective variety. The {\it character variety} $\Char(X)$ is the connected component of $\Hom(\pi_1(X),\bC^*)$ containing the identity. $\Char(X)$ is isomorphic to
$(\bC^*)^{b_1(X)}$, and it is identified with the maximal spectrum $\spec \bC[H_{1,f}(X,\bZ)]$ of the group ring $\bC[H_{1,f}(X,\bZ)]$ of the free part of $H_1(X,\bZ)$.

Let  $\alb: X \to \Alb(X)$ be the Albanese map associated to $X$ (see e.g. \cite{Iit}). The Albanese variety $\Alb(X)$ is a semi-abelian variety and the Albanese map $\alb$ induces an isomorphism between the free abelian part of $H_1(X,\bZ)$ and $H_1(\Alb(X),\bZ)$. 
Therefore, $\Char(X) \cong \Char(\Alb(X))$.

\bd 
The {\it cohomology jump loci of $X$} are defined as:
\be\label{cre} \sV^{i}(X): =  \{\rho \in \Char(X) \mid   H^i(X,  L_\rho) \neq 0\} ,\ee 
 where $L_\rho$ is as before the rank-one $\bC$-local system on $X$ associated to $\rho$. 
\ed
Note that $ \sV^0(X)= \{\mathbf{1} \}$, where $\mathbf{1}$ denotes the trivial character.

\bc \label{manifold}
Let  $X$ be a  smooth quasi-projective variety of complex dimension $n$.  Assume that $ \text{ }^p \sH^j (R\alb_* \bC_X[n])=0$ for $j\notin [b,c]$. Then the cohomology jump loci $\sV^i(X)$ have the following properties:
\begin{itemize}
\item[(1)] {\it Propagation property}:
$$ 
  \sV^{n+b}(X) \supseteq \sV^{n+b-1}(X) \supseteq \cdots \supseteq \sV^{0}(X) = \{ \mathbf{1} \};
$$
$$ 
  \sV^{n+c}(X)  \supseteq \sV^{n+c+1}(X) \supseteq \cdots \supseteq \sV^{2n}(X)  .
$$
\item[(2)] {\it Codimension lower bound}: for any $ i\geq 0$, $$ \codim_{sa} \sV^{n+b-i}(X)\geq i \ \ {\rm and} \  \codim_{a} \sV^{n+c+i}(X) \geq i.$$
\item[(3)]For generic $\rho \in  \Char(X)$ and all $ i \notin [n+b, n+c]$, $$H^{i}(X,  L_\rho)=0.$$ 
\item[(4)] 
$b_i(X)>0$ for any $i\in [0, n+b]$,
and 
$b_1(X)\geq n+b$.
\end{itemize} 
\ec

\begin{proof} 
The second and third claims follow directly from Theorem \ref{characterization}. 
The first part of the propagation property follows from Corollary \ref{mell} and Remark \ref{half}.    The second part of the propagation property follows from an analogous dual argument.

The fact that $b_i(X)>0$ for any $i\in [0, n+b]$ follows from the propagation property, since $\{\mathbf{1}\} \in\sV^{ 0} (X)$. The codimension lower bound yields that $\codim_{sa} \sV^0(X)= b_1(X)\geq n+b$. Therefore, $b_1(X)\geq k \geq n+b$.
\end{proof}

\br\label{ssm} 
 Assume that the Albanese map $\alb :X\to \Alb(X)$ is proper. Let $$r(\alb)=\dim (X \times_{\Alb(X)} X) -\dim X$$ be the \emph{defect of semi-smallness} of $\alb$. Then, by the decomposition theorem \cite{BBD}, in Corollary \ref{manifold} we have that  $[b,c]=[-r(\alb),r(\alb)].$  

One particular interesting case is when $\alb$ is proper and semi-small, in which case $r(\alb)=0$. It was shown in \cite[Remark 1.3]{LMW2} that  $X$ admits a proper semi-small map   $f : X \to G$ to some complex semi-abelian variety $G$ if and only if   the Albanese map $\alb: X \to \Alb(X)$ is proper and semi-small.   It is sometimes easier to construct a proper semi-small map $f$ to a complex semi-abelian variety than to check directly if $\alb$ is proper and semi-small, e.g., see Examples \ref{va} and \ref{ample} below.
\er

%%%%%%%%%%%%%%%%%%%%%%%%%%%%%%%

\subsection{Topology of the Albanese map}\label{am}

In this subsection, we give some applications of Theorem \ref{package} and Theorem \ref{simple} to the topological study of the Albanese map $\alb : X \to \Alb(X)$ corresponding to a smooth complex quasi-projective variety $X$. 

\bc\label{isol}  Let $X$ be an $n$-dimensional smooth complex quasi-projective variety.  
If $\bigcup_{i=0}^{2n} \sV^i(X)$ contains an isolated point, then $\alb : X \to \Alb(X)$ is dominant. 
\ec
\begin{proof}
%We first give a proof in the case when $\alb$ is proper.
%By the decomposition theorem \cite{BBD}, $R\alb_* \bC_X[n]$ is a direct sum of shifted semi-simple perverse sheaves on $\Alb(X)$. Denote by $S$ the collection of all simple summands appearing (up to a shift) in $R\alb_* \bC_X[n]$. Then, by using Theorem \ref{package}(i), we have that \be\label{un} \bigcup_{i=0}^{2n} \sV^i(X) = \bigcup_{\sP\in S} \sV^0(\Alb(X),\sP).\ee  
%Recall from  Theorem \ref{simple} that  for a simple perverse sheaf $\sP$, $\sV^0(\Alb(X), \sP)$ is irreducible. 
%If $\bigcup_{i=0}^{2n} \sV^i(X)$ has an isolated point, then by (\ref{un}) there exists at least one simple perverse sheaf $\sP\in S$ such that $\sV^0(\Alb(X),\sP)$ is exactly this isolated point. In particular, Corollary \ref{gvcor}(ii) yields that $\chi(\Alb(X),\sP)=0$. 
%Then it follows from Theorem \ref{simple} that $\sP$ is a rank-one $\bC$-local system on $\Alb(X)$.  So $\alb$ is surjective. 0
%Let us now discuss the proof in the case when $\alb$ is not proper. 
%{\color{red}I simply deleted the proof of the proper case.}

Set $\sF:= R\alb_* \bC_X[n]$, which is a bounded $\bC$-constructible complex on $\Alb(X)$. By the projection formula, we have 
$$\bigcup_{i} \sV^i(X) =\bigcup_i \sV^i(\Alb(X),\sF).$$
Furthermore, Proposition \ref{Alexander} shows that 
$$  \bigcup_i \sV^i(\Alb(X),\sF) =\bigcup_j \sV^0(\Alb(X), \,^p\sH^j(\sF)).$$
Since, by our assumption,  $\bigcup_{i=0}^{2n} \sV^i(X)$ contains an isolated point, this isolated point is contained in $\sV^0(\Alb(X), \,^p\sH^{j_0}(\sF)) $, for some integer $j_0$. 

 Corollary \ref{union} yields that there exists at least one simple perverse sheaf $\sP$ on $\Alb(X)$ such that $\sP$ is a decomposition factor of $ \,^p\sH^{j_0}(\sF)$ and $\sV^0(\Alb(X),\sP)$ is exactly this isolated point. Then it follows from Theorem \ref{simple} that $\sP$ is a rank-one $\bC$-local system on $\Alb(X)$.  So $\alb$ is dominant. 
\end{proof}

\bc\label{fmp} Let $X$ be an $n$-dimensional smooth complex quasi-projective variety with Albanese map $\alb : X \to \Alb(X)$. If $\bigcup_{i=0}^{2n} \sV^i(X)$ consists of finitely many points, then $R^i\alb_* \bC_X$ is a  $\bC$-local system on $\Alb(X)$ for every $i$. If, moreover, $alb$ is proper, then  $R\alb_* \bC_X[n]$ is a direct sum of shifted rank-one $\bC$-local system on $\Alb(X)$.
\ec
\begin{proof} 
Set $\sF:= R\alb_* \bC_X[n]$. As in the proof of Corollary \ref{isol},  each $\sV^0(\Alb(X), \,^p\sH^j(\sF))$ is either empty or consists of finitely many points. Since every perverse sheaf is the extension of finitely many simple perverse sheaves, we have by Theorem \ref{simple} and Corollary \ref{union} that $^p\sH^j(\sF)$ is a shift of a local system on $\Alb(X)$ for every $j$. Thus, $R^i\alb_* \bC_X=\sH^{i-n}(\sF)$ is a $\bC$-local system on $\Alb(X)$ for every $i$ (see, e.g., \cite[Proposition~4.3]{LMW20}).

For the last assertion, we use the decomposition theorem \cite{BBD} which yields that $R\alb_* \bC_X[n]$ is a direct sum of (shifted) simple perverse sheaves. As above, Theorem \ref{simple} implies that each of these simple perverse sheaves is (up to a shift) a rank-one $\bC$-local system on $\Alb(X)$.
\end{proof}

\bc \label{isomorphic}  Let $X$ be a connected smooth complex quasi-projective variety of dimension $n$, with proper and semi-small Albanese map $\alb : X \to \Alb(X)$. Then $\sV^n(X)$ consists of finitely many points if, and only if, $\alb$ is an isomorphism.
\ec
\begin{proof} 
The ``if" part is obvious. We prove the ``only if" part.
Since $\alb : X \to \Alb(X)$ is proper and semi-small, $R \alb_* \bC_X[n]$ is a perverse sheaf on $\Alb(X)$. By  Corollary \ref{manifold}(1) we have
$$\bigcup_{i=0}^{2n} \sV^i(X) = \sV^n(X).$$ 
If $\sV^n(X)$ consists of finitely many points, then by Corollary \ref{fmp} we have that $R^i \alb_* \bC_X$ is a finite direct sum of rank-one $\bC$-local systems for every $i$. Since $R\alb_* \bC_X[n]$ is a perverse sheaf on $\Alb(X)$, $R^i \alb_* \bC_X=0$ unless $i=0$. 
 Furthermore, Corollary \ref{survive} yields that $\sV^0(X)=\sV^n(X)$. But $\sV^0(X)$ consists of only one point, the constant sheaf. This implies that $R\alb_* \bC_X =\bC^{\oplus k}_{\Alb(X)}$, a finite direct sum of the constant sheaf on $\Alb(X)$.  Since $X$ is connected, 
\[
\bC=H^0(X, \bC_X)= \mathbb{H}^0(\Alb(X), R\alb_* \bC_X)=\bC^k,
\]
which means that $k=1$. Since $R\alb_* \bC_X =\bC^{\oplus k}_{\Alb(X)}$, all fibers of $\alb$ are zero-dimensional. In other words, $\alb$ is quasi-finite. A proper quasi-finite map is finite. Since $k=1$, the Albanese map $\alb: X\to \Alb(X)$ is a finite morphism of degree one, that is, an isomorphism. 
 %In particular, since $\alb$ is proper, all fibers of $\alb$ are zero-dimensional. In other words, $\alb$ is quasi-finite. A proper quasi-finite map is finite. Since $R^0\alb_*\bC_X$ is the constant sheaf on $\Alb(X)$, the finite map $\alb$ has no ramification locus, hence $\alb$ is a covering map. 
%If $k\geq 1$, it follows from $R\alb_* \bC_X =\bC^{\oplus k}_{\Alb(X)}$ that $X$ is not connected, which contradicts our assumption. So $k=1$ and $\alb$ is an isomorphism. }
\end{proof}
%%%%%%%%%%%%%%%%%%%%%%%%

The following generalization of \cite[Corollary 2.6]{BC} gives a topological characterization of semi-abelian varieties.
\bp\label{topsa} Let $X$ be a smooth quasi-projective variety with proper Albanese map (e.g., $X$ is projective), and assume that $X$ is homotopy equivalent to a torus. Then $X$ is isomorphic to a semi-abelian variety.
\ep

\begin{proof}
Since $X$ is homotopy equivalent to a torus, $\bigcup_{i\geq  0} \sV^i(X)=\{ \mathbf{1}\}$. By Corollary \ref{fmp}, $R\alb_* \bC_X$ is a direct sum of shifted rank-one constant sheaves on $\Alb(X)$. Since $X$ and $\Alb(X)$ are both homotopy equivalent to tori, and since $b_1(X)=b_1(\Alb(X))$, we have that $b_i(X)=b_i(\Alb(X))$ for any $i$. Therefore, $R\alb_* \bC_X\cong \bC_{\Alb(X)}$. Now, the same argument as in Corollary \ref{isomorphic} shows that $\alb: X\to \Alb(X)$ is an isomorphism. 
\end{proof}

%%%%%%%%%%%%%%%%%%%

\subsection{Abelian duality spaces}\label{ads}

Let us recall the definition of (partial) abelian duality spaces from \cite{LMW3}, see also \cite{DSY}.

Let $X$ be a connected finite CW complex, and denote $\pi_1(X)$ by $\pi$. Let $\phi: \pi\to \pi'$ be a non-trivial homomorphism to an abelian group $\pi'$. There is a canonical $\bZ [\pi^\prime]$-local coefficient system $\sL_{\phi}$ on $X$, whose monodromy action is given by the composition of $\pi\overset{\phi}{ \to} \pi^\prime$ with the natural multiplication  $\pi^\prime\times \bZ[ \pi^\prime] \to \bZ [\pi^\prime]$.  
\bd\label{dpad} We call $X$ a {\it partially abelian duality space of dimension $n$ with respect to} $\phi:\pi \to \pi'$, if the following two conditions are satisfied:
\begin{itemize}
\item[(a)] $H^i(X, \bZ [\pi'])=0$ for $i\neq n$, 
\item[(b)] $H^n(X, \bZ [\pi'])$ is a (non-zero) torsion-free $\bZ$-module. 
\end{itemize}
If $\pi^\prime=\pi^{ab}=H_1(X, \bZ)$ and $\phi$ is the abelianization map, then a finite connected CW complex $X$ satisfying (a) and (b) is called an {\it abelian duality space of dimension $n$}, see \cite{DSY}.
\ed

\br There is a canonical $\bZ[\pi']$-module isomorphism
$$H^i (X, \bZ[\pi']) \cong H^i(X,\sL_{\phi}),$$
for any $i$.
\er

Examples of (partially) abelian duality spaces where constructed in \cite[Theorem 4.11]{LMW3} via algebraic maps to complex affine tori. 
The following  two results provide generalizations of \cite[Theorem 4.11]{LMW3} to the semi-abelian setting.
\bt\label{sab} Let $X$ be an $n$-dimensional smooth complex  quasi-projective variety, and let $f:X \to G$ be an algebraic map to a semi-abelian variety $G$. Assume that $X$ is homotopy equivalent to an $n$-dimensional CW complex (e.g., $X$ is affine). If $Rf_*\kk_X[n] \in \,^p D^{\geq 0}(G,\kk)$ for any field $\kk$ (e.g., if $f$ is quasi-finite), then $X$ is a partially abelian duality space of dimension $n$ with respect to $f_*:\pi_1(X) \to \pi_1(G)$. 
\et

\begin{proof} We first show that for any field $\kk$, the value of the corresponding Mellin transformation on $\sF:=Rf_*\kk_X[n]$ has nonzero cohomology only in degree zero.

First, by Corollary \ref{mell}, we have that 
\be\label{les} H^i(\sM_*(\sF))=0, \  {\rm for } \ i<0.\ee
Secondly, 
$$H^i(\sM_*(\sF))\cong H^{i+n}(G,\sL_G \otimes_{\kk}Rf_*\kk_X)\cong H^{i+n}(X,f^*\sL_G),$$
where the last isomorphism follows by the projection formula (since $\sL_G$ is a local system). Finally, since $X$ has the homotopy type of an $n$-dimesional CW-complex, and 
$f^*\sL_G$ is a local system on $X$, we have that 
$H^{i+n}(X,f^*\sL_G)=0$ for $i>0$. Hence, \be\label{ges} H^i(\sM_*(\sF))=0, \  {\rm for } \ i>0.\ee

Altogether, $$H^i(\sM_*(\sF))=0, \  {\rm for } \ i\neq 0.$$
The desired result follows now by using the same argument as in \cite[Theorem 4.11]{LMW3}(1).
\end{proof}

Let us now specialize to the case when $G=\Alb(X)$ is the Albanese variety of $X$, and $f=\alb$ is the Albanese map. 

\bt \label{abelianduality} Let $X$ be an $n$-dimensional smooth complex quasi-projective variety, which is homotopy equivalent to an $n$-dimensional CW complex (e.g., $X$ is affine).  Suppose the Albanese map $\alb$ is proper and semi-small, or $\alb$ is quasi-finite. Then $X$ is an abelian duality space of dimension $n$. 
\et
\begin{proof}
The assumptions on $\alb$ imply that $R\alb_* L[n]$ is a perverse sheaf for any local system $L$ over any field $\kk$. By the arguments in the proof of \cite[Theorem 4.11]{LMW3}, it suffices to show that $\sM_*(R\alb_* L[n])$ has nonzero cohomology only in degree zero. The assertion follows exactly as in the proof of Theorem \ref{sab}. 
\end{proof}

\bex\label{va}
Let $X$ be an $n$-dimensional {\it very affine manifold}, i.e., a smooth closed subvariety of a complex affine torus $T=(\bC^*)^m$ (e.g., the complement of an essential hyperplane arrangement or of a toric arrangement). The closed embedding  $i:X\hookrightarrow T$ is a proper semi-small map, and hence $\alb:X \to \Alb(X)$ is also proper and semi-small. Since $X$ is also affine, we get by Theorem \ref{abelianduality} that $X$ is an abelian duality space of dimension $n$. (This example generalizes \cite[Example 5.1]{LMW3}.)
\eex

\bex \label{ample}  Let $Y$ be a smooth complex projective variety, and let $\sL$ be a very ample line bundle on $Y$. Consider an $N$-dimensional sub-linear system $\vert E\vert $ of $\vert \sL \vert$ such that $E$ is base point free over $Y$. Then a basis $\{s_0, s_1, \cdots, s_N\}$ of $E$ gives  a well-defined morphism  $$\varphi_{\vert E \vert}:   Y \to \mathbb{CP}^N .$$ Each  $\{ s_i=0 \} $ defines a  hypersurface $V_i$ in $Y$. In particular, $\bigcap_{i=0}^N V_i = \emptyset$. Then $\varphi_{\vert E \vert}$ is a finite morphism; for a proof, see \cite[Example 5.4]{LMW3}.
Taking the restriction $f$ of $\varphi_{\vert E \vert}$ over  $X= Y\setminus \bigcup_{i=0}^N V_i $, we get a map
\begin{align*}
 f:=\left(\dfrac{s_1}{s_0}, \dfrac{s_2}{s_0}, \cdots,\dfrac{s_N}{s_0}\right): X 
 \lra T=(\bC^*)^N \end{align*}
which is finite, hence  proper and semi-small. As discussed in Remark \ref{ssm}, this implies that the albanese map $\alb$ is also proper and semi-small.
Theorem \ref{abelianduality} yields that $X$ is an abelian duality space. 
\eex

\br
Example \ref{ample} above extends \cite[Example 5.4]{LMW3}, where the first Betti number of $Y$ was required to vanish. If all hypersurfaces $V_i$ are smooth and they intersect locally like hyperplanes, then the above example is a special case of \cite[Theorem 1.1]{DS}. However, we do not need any assumption on the singularities of the $V_i$'s and their intersections. 
\er

\bex  It is shown in \cite{DS,DSY} that the complement of an {\it elliptic arrangement} is an abelian duality space. This fact also follows from Theorem \ref{abelianduality} as we shall now indicate. Let $E$ be an elliptic curve, and let $\mathcal{A}$ be an essential elliptic arrangement in $E^n$ with complement $X:=E^n \setminus \mathcal{A}$. Then $X$ is a complex $n$-dimensional affine variety. By the universal property of the Albanese map, the natural embedding $X \hookrightarrow E^n$ factorizes through $\alb:X \to \Alb(X)$. Hence the Albanese map $\alb:X \to \Alb(X)$ is also an embedding (hence, in particular, quasi-finite). So Theorem \ref{abelianduality} applies to show that $X$ is an abelian duality space of dimension $n$. \eex

The affine condition is a sufficient but not a necessary condition for an $n$-dimensional smooth complex quasi-projective variety to be homotopy equivalent to a finite CW-complex of dimension $n$. The following is a simple example of an $n$-dimensional smooth complex quasi-projective variety which is an abelian duality space of dimension $n$, but which is not affine. 
\bex
Let $X$ be the blowup of $(\cc^*)^2$ at a point. Then $X$ is an abelian duality space of dimension 2. Indeed, the Albanese map is the blowdown map $X\to (\cc^*)^2$, which is proper and semismall. 
Moreover, $X$ is homotopy equivalent to the $2$-dimensional CW-complex $T^2 \vee S^2$, where $T^2=S^1\times S^1$ is the real $2$-dimensional torus. Thus, $X$ is an abelian duality space by Theorem \ref{abelianduality}. However, $X$  is not affine because it contains a closed subvariety $\mathbb{CP}^1$. 
\eex

If an $n$-dimensional smooth complex quasi-projective variety $X$ is an abelian duality space and $\alb: X\to \Alb(X)$ is proper, then $\alb$ is semi-small. Indeed, by using the decomposition theorem and the relative hard Lefschetz theorem, one can readily see that $R\alb_* \bC_X[n]$ is a perverse sheaf.  We conjecture that, in some sense, the converse of Theorem \ref{abelianduality} is also true. 
\begin{conj} Let $X$ be an $n$-dimensional smooth complex quasi-projective variety with proper Albanese map $\alb : X \to \Alb(X)$. Then $X$ is an abelian duality space of dimension $n$ if and only if  $\alb$ is semi-small and $X $ is homotopy equivalent to a finite $n$-dimensional CW complex.  \end{conj}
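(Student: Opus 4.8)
The plan is to prove the two implications separately; the first is essentially already available, while the second contains all of the difficulty. For the direction $(\Leftarrow)$, suppose $\alb$ is semi-small and $X$ is homotopy equivalent to a finite $n$-dimensional CW complex. Since $\alb$ is moreover proper by hypothesis, this is exactly the input of Theorem~\ref{abelianduality} under its ``$\alb$ proper and semi-small'' alternative, so $X$ is an abelian duality space of dimension $n$ and nothing new has to be shown. For $(\Rightarrow)$, assume $X$ is an abelian duality space of dimension $n$ with $\alb$ proper. That $\alb$ is then semi-small is already recorded in the paragraph preceding the conjecture: abelian duality forces $H^i(X,\sL_\phi)=0$ for $i\neq n$, so by the decomposition theorem for the proper map $\alb$ and the relative Hard Lefschetz theorem (cf.\ Theorem~\ref{HL}) the semisimple summands of $R\alb_*\bC_X[n]$ can only sit in perverse degree $0$, i.e.\ $R\alb_*\bC_X[n]$ is perverse and the defect $r(\alb)$ of Remark~\ref{ssm} vanishes. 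The whole content of the conjecture is therefore the remaining assertion: an abelian duality space $X$ of dimension $n$ with proper Albanese map is homotopy equivalent to a \emph{finite $n$-dimensional} CW complex.

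To attack this, first I would reduce it to a homological statement: a smooth complex quasi-projective variety always has the homotopy type of \emph{some} finite CW complex, so the only issue is the dimension, i.e.\ whether $\mathrm{cd}(X):=\mathrm{cd}_{\bZ[\pi_1(X)]}(X)$ equals $n$. The lower bound $\mathrm{cd}(X)\geq n$ is immediate since $H^n(X,\sL_\phi)\neq 0$. For the upper bound one must show $H^i(X,\bZ[\pi_1(X)])=0$ for all $i>n$; granted this, Wall's realization theorem produces, for $n\geq 3$, a finite $n$-dimensional CW complex homotopy equivalent to $X$, which finishes the proof. The low-dimensional cases $n\leq 2$ would have to be treated by hand, the case $n=2$ being delicate because of the Eilenberg--Ganea phenomenon.

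It remains to obtain $H^i(X,\bZ[\pi_1(X)])=0$ for $i>n$. The natural approach is to propagate the vanishing that abelian duality actually supplies, namely $\mathrm{cd}_{\bZ[\pi^{ab}]}(X)=n$, i.e.\ control of the cohomology of the Albanese cover of $X$ corresponding to $\ker\bigl(\alb_*\colon\pi_1(X)\to\pi_1(\Alb(X))\bigr)$. One would combine this with the fact that $R\alb_*\bC_X[n]$ is a perverse sheaf on the semi-abelian variety $G=\Alb(X)$ --- an extension $1\to T\to G\to A\to 1$ with $T\cong(\bC^*)^m$ and $\dim A=g$ --- and with the Artin-type vanishing for the affine fibration $G\to A$ (vanishing of the perverse cohomology sheaves of the pushforward to $A$ outside the range $[-m,0]$, as in the proof of Theorem~\ref{package}, together with the structure of perverse sheaves on the torus fibres, cf.\ Corollary~\ref{specialize}) to bound the cohomological amplitude of $X$. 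The hard part, and the reason this is still a conjecture, is precisely the passage from $\bZ[\pi^{ab}]$-coefficients to full $\bZ[\pi_1(X)]$-coefficients: abelian duality only constrains cohomology with respect to the abelian quotient group ring, whereas an $n$-dimensional CW structure requires a constraint with respect to the whole group ring, and there is no formal comparison between cohomological dimension over a quotient group ring and over the group itself. An alternative would be to argue geometrically: deformation retract $G$ onto the compact real $(m+2g)$-dimensional manifold obtained by replacing each $(\bC^*)^m$-fibre by its maximal compact subtorus, choose on it a stratified Morse function assembled from the coordinates of the torus part $T$, and pull it back through the proper semi-small map $\alb$; semi-smallness should keep all critical indices $\leq n$, while abelian duality should rule out the extra handles that occur, for instance, when $X=A$ is an abelian variety (proper and semi-small over itself, but only $2n$-dimensional up to homotopy, and not an abelian duality space). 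Making either heuristic precise is, I expect, the main obstacle, and may require genuinely new input beyond the methods of this paper.
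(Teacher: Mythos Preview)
The statement you are asked to prove is labeled \emph{Conjecture} in the paper; the paper does not supply a proof. What the paper does contain is exactly the two pieces you identify: the direction $(\Leftarrow)$ is Theorem~\ref{abelianduality}, and the ``semi-small'' half of $(\Rightarrow)$ is the remark immediately preceding the conjecture (decomposition theorem plus relative Hard Lefschetz force $R\alb_*\bC_X[n]$ to be perverse, hence $r(\alb)=0$). The assertion that an $n$-dimensional abelian duality space with proper Albanese map must have the homotopy type of an $n$-dimensional complex is left open, and the authors do not indicate any strategy for it. So there is nothing to compare your attempt against: your analysis of what is established and what remains open matches the paper, and you are right to flag the last step as the genuine content of the conjecture.

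Two technical comments on your sketch for that open step. First, your reduction is slightly misstated: the criterion from Wall's finiteness theory for a finite complex $X$ (with $\pi=\pi_1(X)$) to be homotopy equivalent to an $n$-complex, $n\geq 3$, is the vanishing of $H^{i}(X;M)$ for all $\bZ\pi$-modules $M$ and all $i>n$ (equivalently, $H_i(\widetilde X;\bZ)=0$ for $i>n$), not merely $H^i(X;\bZ\pi)=0$ for the single module $\bZ\pi$. Second, your diagnosis of the obstruction is on target: abelian duality only controls cohomology of the \emph{abelian} cover, i.e.\ coefficients in $\bZ[\pi^{ab}]$, and there is no general mechanism to lift such vanishing to arbitrary $\bZ\pi$-coefficients. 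Your Morse-theoretic alternative is suggestive but, as you say, the abelian-variety example $X=A$ (proper, semi-small, not an abelian duality space, and of real dimension $2n$) shows that semi-smallness alone cannot bound handle indices by $n$, so any argument must use the abelian duality hypothesis in an essential, and currently unclear, way. None of this is wrong; it simply confirms that the statement remains a conjecture.
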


%%%%%%%%%%%%%%%%%%%%%%%%%%%%%

%------------------------------------------------------------------

\begin{thebibliography}{ADMSPR}

\bibitem{AB} D. Arinkin, R. Bezrukavnikov, {\it Perverse coherent sheaves},  Mosc. Math. J. 10 (2010), no. 1, 3--29, 271.

\bibitem{BC} O. Baues, V. Cort\'es, {\it Aspherical K\"ahler manifolds with solvable fundamental group}, 
Geom. Dedicata { 122} (2006), 215--229. 

\bibitem{BBD} 
A. A. Beilinson, J. Bernstein, P. Deligne, {\it Faisceaux pervers}, Ast\'{e}risque 100, Paris, Soc.
Math. Fr. 1982. 

\bibitem{BSS} B. Bhatt, S. Schnell, P. Scholze, {\it 
Vanishing theorems for perverse sheaves on abelian varieties, revisited}, Selecta Math. (N.S.) 24 (2018), no. 1, 63--84. 

\bibitem{BE}  R. Bieri, B. Eckmann,  {\it Groups with homological duality generalizing Poincar\'e duality}, Invent. Math. {20} (1973), 103--124.


\bibitem{BG} W. Bruns, J. Gubeladze, {\it 
Polytopes, rings, and K-theory.}  Springer Monographs in Mathematics. Springer, Dordrecht, 2009.
  
\bibitem{BW} N. Budur, B. Wang,
 {\it Cohomology jump loci of differential graded Lie algebras}, Compositio Math. 151, no. 8 (2015), 1499--1528.
 
\bibitem{BW17} N. Budur, B. Wang, {\it Absolute sets and the decomposition theorem},  Ann. Sci. \'Ecole Norm. Sup. (4), 53 (2020), 469--536.

\bibitem{CM} M. A. A. de Cataldo, L. Migliorini, {\it The decomposition theorem, perverse sheaves and the topology of algebraic maps}. Bull. Amer. Math. Soc. (N.S.) 46 (2009), no. 4, 535--633.

\bibitem{DS} G. Denham, A. Suciu, {\it
Local systems on arrangements of smooth, complex algebraic hypersurfaces}, Forum Math. Sigma 6 (2018), e6, 20 pp.

\bibitem{DSY} G. Denham, A. Suciu, S. Yuzvinsky, {\it Abelian duality and propagation of resonance},  Selecta Math. 23 (2017), no. 4, 2331--2367.

\bibitem{D2} A. Dimca, {\it Sheaves in Topology},
Universitext, Springer-Verlag, Berlin, 2004.

\bibitem{E} D. Eisenbud, {\it Commutative algebra. With a view toward algebraic geometry}, Graduate Texts in Mathematics, 150. Springer-Verlag, New York, 1995.

\bibitem{FK} J. Franecki, M. Kapranov, {\it  The Gauss map and a noncompact Riemann-Roch formula for constructible sheaves on semiabelian varieties},  Duke Math. J. 104 (2000), no. 1, 171--180. 

\bibitem{GL} O. Gabber, F. Loeser, {\it Faisceaux pervers $\ell$-adiques sur un tore}, Duke Math. J. 83 (1996), no. 3, 501--606.

\bibitem{GMV} S. Gelfand, R. MacPherson, K. Vilonen, 
{\it Perverse sheaves and quivers}, Duke Math. J. 83 (1996), no. 3, 621--643. 

\bibitem{Ha} R. Hartshorne, {\it Algebraic geometry}, Graduate Texts in Mathematics, no. 52, Springer-Verlag, New York-Heidelberg, 1977.

\bibitem{Iit} S. Iitaka, {\it  Logarithmic forms of algebraic varieties}, J. Fac. Sci. Univ. Tokyo Sect. IA Math. 23 (1976), no. 3, 525--544. 

\bibitem{Kas} M. Kashiwara, {\it $t$-structures on the derived categories of holonomic $\mathcal{D}$-modules and coherent $\mathcal{O}$-modules},  Mosc. Math. J. 4 (2004), no. 4, 847--868, 981.

\bibitem{KiW} R. Kiehl, R. Weissauer, {\it Weil conjectures, perverse sheaves and l'adic Fourier transform}, Ergebnisse der Mathematik und ihrer Grenzgebiete. 3. Folge. A Series of Modern Surveys in Mathematics, 42. Springer-Verlag, Berlin, 2001.

\bibitem{Kra} T. Kr\"{a}mer, {\it Perverse sheaves on semiabelian varieties}, Rend. Semin. Mat. Univ. Padova 132 (2014), 83--102.

\bibitem{KW} T. Kr\"{a}mer, R. Weissauer, {\it Vanishing theorems for constructible sheaves on abelian varieties}, J. Algebraic Geom. 24 (2015), no. 3, 531--568.

\bibitem{LMW2} Y. Liu, L. Maxim, B. Wang, {\it Generic vanishing for semi-abelian varieties and integral Alexander modules}, Math. Z. 293 (2019), no. 1-2, 629--645.

\bibitem{LMW3} Y. Liu, L. Maxim, B. Wang, {\it
Mellin transformation, propagation, and abelian duality spaces},  Adv. Math. 335 (2018), 231--260. 

\bibitem{LMW19} Y. Liu, L. Maxim, B. Wang, {\it Perverse sheaves on semi-abelian varieties -- a survey of properties and applications}, Eur. J. Math. 6 (2020), no. 3, 977--997.

\bibitem{LMW20} Y. Liu, L. Maxim, B. Wang, {\it Aspherical manifolds, Mellin transformation and a question of Bobadilla-Kollár}, arXiv:2006.09295.

\bibitem{MP} R. MacPherson,  {\it Global questions in the topology of singular spaces}, Proceedings of the International Congress of Mathematicians, Vol. 1, 2 (Warsaw, 1983), 213--235, PWN, Warsaw, 1984.

\bibitem{MV} R. MacPherson, K. Vilonen, 
{\it Elementary construction of perverse sheaves}, 
Invent. Math. 84 (1986), no. 2, 403--435. 

\bibitem{Moc} T. Mochizuki, {\it Wild harmonic bundles and wild pure twistor D-modules}, Ast\'erisque No. 340 (2011).
 
\bibitem{PP11} G. Pareschi, M. Popa, {\it GV-sheaves, Fourier-Mukai transform, and generic vanishing}, Amer. J. Math. 133 (2011), no. 1, 235--271. 

\bibitem{Sa0} M. Saito, \emph{Modules de Hodge polarisables}, Publ. Res. Inst. Math. Sci. {24} (1988), no. 6, 849--995.

\bibitem{Sa1} M. Saito, \emph{Mixed Hodge Modules}, Publ. Res. Inst. Math. Sci. {26} (1990), no. 2, 221--333.

\bibitem{Sch}
C. Schnell, {\it  Holonomic $\mathcal{D}$-modules on abelian varieties},
Publ. Math. Inst. Hautes \'{E}tudes Sci. 121 (2015), 1--55.  

\bibitem{Schu} J. Sch\"{u}rmann,  {\it Topology of Singular Spaces and Constructible Sheaves}, Monografie Matematyczne 63, Birkh\"auser Verlag, Basel 2003.

\bibitem{Wa17} B. Wang, {\it Algebraic surfaces with zero-dimensional cohomology support locus}, Taiwanese J. Math. 22 (2018), no. 3, 607--614.

\bibitem{We0} 
R. Weissauer, {\it Degenerate Perverse Sheaves on Abelian Varieties}, arXiv:1204.2247.

\bibitem{We16} R. Weissauer, {\it Vanishing theorems for constructible sheaves on abelian varieties over finite fields}, Math. Ann. 365 (2016), no. 1-2, 559--578.

\bibitem{We16b} R. Weissauer, {\it Remarks on the nonvanishing of cohomology groups for perverse sheaves on abelian varieties},  arXiv:1612.01500.

\end{thebibliography}
\end{document}